\newcommand{\C}{\mathbb C}
\newcommand{\R}{\mathbb R}
\newcommand{\N}{\mathbb N}
\newcommand{\Z}{\mathbb Z}
\newcommand{\imply}{\Longrightarrow}
\newcommand{\fD}{\mathcal D}
\newcommand{\fO}{\mathcal O}
\newcommand{\im}{\mathrm{Im }}
\newcommand{\supp}{\mathrm{supp }}
\newcommand{\Tr}{\mathrm{Tr}}
\newtheorem{thm}{Theorem}[section]
\newtheorem{lem}[thm]{Lemma}
\theoremstyle{definition}
\newtheorem{defn}[thm]{Definition}  
\newtheorem{exam}[thm]{Example}     
\newcommand{\Addresses}{{
  \bigskip
  \footnotesize
  S\o ren Haagerup, \textsc{IMADA, University of Southern Denmark, Campusvej 55, 5230 Odense M, Denmark.}\par\nopagebreak
  \textit{E-mail address}: \texttt{shaagerup@imada.sdu.dk}
  \medskip
  \medskip

  Uffe Haagerup. 

  \medskip
  \medskip

  Maria Ramirez-Solano, \textsc{IMADA, University of Southern Denmark, Campusvej 55, 5230 Odense M, Denmark.}\par\nopagebreak
  \textit{E-mail address}: \texttt{solano@imada.sdu.dk}
}}
\begin{document}

\title[Computational explorations of $T$ for the amenability problem of $F$]
 {Computational explorations of the Thompson group $T$ for the amenability problem of $F$}

\author[ S.~Haagerup~U.~Haagerup~M.~Ramirez-Solano ]{S. Haagerup, U. Haagerup$^*$, M. Ramirez-Solano$^\dag$.}
\date{June 23,2018}


\keywords{Thompson's groups $F$, $T$, estimating norms of products in group $C^*$-algebras, amenability, cogrowth, computer calculations}

\subjclass[2010]{20F65, 20-04, 43A07, 22D25, 46L05}
\thanks{$^*$Supported by the Villum Foundation under the project ``Local and global structures of groups and their algebras" at University of Southern Denmark, and by the ERC Advanced Grant no. OAFPG 247321, and partially supported by the Danish National Research Foundation (DNRF) through the Centre for Symmetry and Deformation at University of Copenhagen, and the Danish Council for Independent Research, Natural Sciences.}
\thanks{$^\dag$ Supported by the Villum Foundation under the project ``Local and global structures of groups and their algebras" at University of Southern Denmark, and by the ERC Advanced Grant no. OAFPG 247321, and by the Center for Experimental Mathematics at University of Copenhagen.}

\begin{abstract}
It is a long standing open problem whether the Thompson group $F$ is an amenable group.
In this paper we show that if $A$, $B$, $C$ denote the standard generators of Thompson group $T$ and  $D:=C B A^{-1}$ then
$$\sqrt2+\sqrt3\,<\,\frac1{\sqrt{12}}||(I+C+C^2)(I+D+D^2+D^3)||\,\le\, 2+\sqrt2.$$
Moreover, the upper bound is attained if the Thompson group $F$ is amenable.
Here, the norm of an element in the group ring $\C T$ is computed in $B(\ell^2(T))$ via the regular representation of $T$. 
Using the ``cyclic reduced" numbers $\tau(((C+C^2)(D+D^2+D^3))^n)$, $n\in\N$, and some methods from our previous paper \cite{HaagerupRamirezSolano} we can obtain precise lower bounds as well as good estimates of the spectral distributions of $\frac1{12}((I+C+C^2)(I+D+D^2+D^3))^*(I+C+C^2)(I+D+D^2+D^3),$ where   $\tau$ is the tracial state on the group von Neumann algebra $L(T)$.
Our extensive numerical computations suggest that 
$$\frac{1}{\sqrt{12}}||(I+C+C^2)(I+D+D^2+D^3)||\approx 3.28,$$
and thus that $F$ might be non-amenable.
However, we can in no way rule out that $\frac1{\sqrt{12}}||(I+C+C^2)(I+D+D^2+D^3)||=\, 2+\sqrt2$.
\end{abstract}

\maketitle
\section{\textbf{Introduction}}\label{s:one}
\begin{defn}
  The Thompson group $T$ is the group of (cyclic) order preserving homeomorphisms $f:\R/\Z\to \R/\Z$ for which:
  \begin{itemize}
    \item $f$ and $f^{-1}$ are piecewise linear with finitely many breakpoints.
    \item all breakpoints of $f$ and $f^{-1}$ are in $\Z[\tfrac 12]/\Z$.
    \item all slopes of $f$ are in the set $2^{\Z}:=\{2^n\mid n\in\Z\}$.
  \end{itemize}
\end{defn}
$T$ is a countable group. It is generated by the elements $C,D$, whose graphs are shown in Fig. \ref{f:generatorsCDofT}.
\begin{figure}
\includegraphics[scale=.50]{./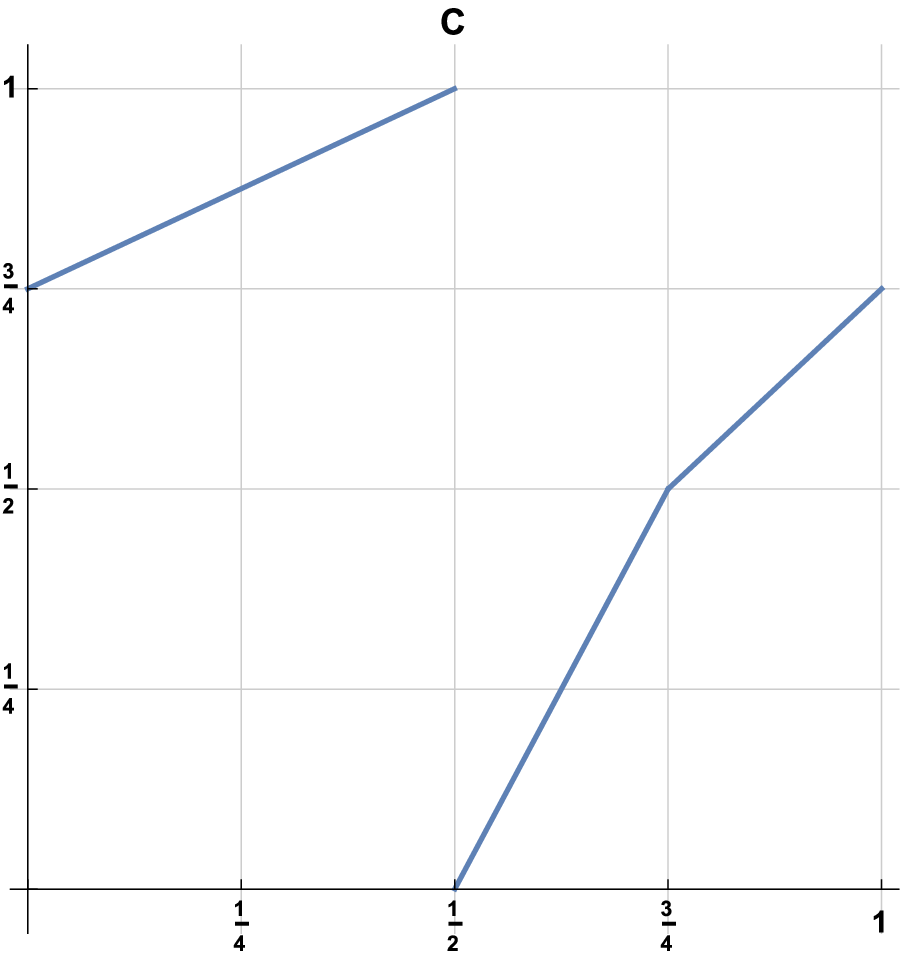}\qquad
\includegraphics[scale=.50]{./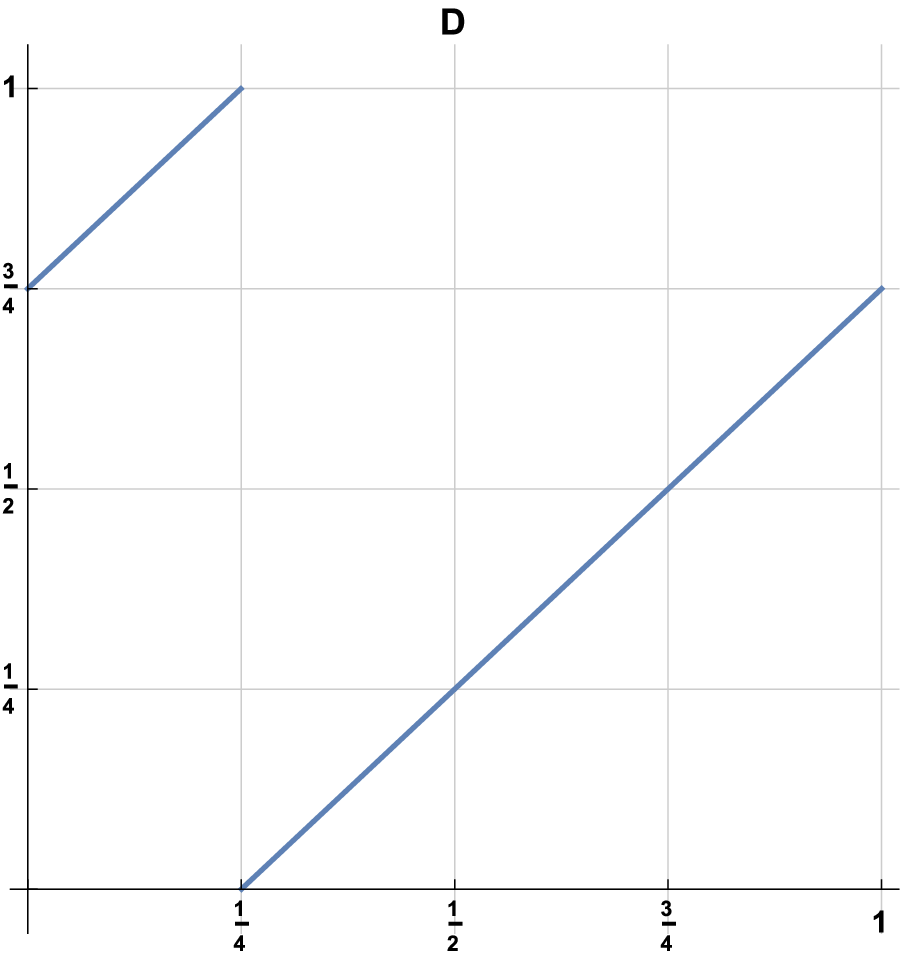}\\
  \caption{The generators $C,D$ of the Thompson group $T$.}\label{f:generatorsCDofT}
\end{figure}
Moreover, it has the finite presentation
\begin{multline}
T=\langle C,D\mid C^3=D^4=(CD)^5=e,\\  [CDC, D^2(CDC)D^2]=[D^2(CDC)D^2,C(D^2CDCD^2)C^{-1}]=e\rangle,
\end{multline}
where the group commutator is given by the definition $[g,h]=ghg^{-1}h^{-1}$.
If $A,B,C$ denote the standard generators of $T$ \cite{CFP}, then $D=C B A^{-1}$.

Recall that $F$ is the subgroup of $T$ given by
$$F=\langle A,B\rangle =\{ f\in T \mid f(0)=0\}.$$  
It is a long standing open problem whether the Thompson group $F$ is amenable. 
The present paper is a continuation of the work started in \cite{HaagerupRamirezSolano} by the same authors.
In that paper, we tested the amenability problem of $F$ by
estimating norms of certain elements in the group ring $\C F$ using computers. 
Thanks to the new algorithms we devised to compute words in $C,D$ in polynomial time (see Section \ref{s:amenability}) and to some results by Haagerup and Olesen \cite{HO}, we can now test the amenability problem of $F$ by computing norms of certain elements in $\C T$.
Extrapolations of our computational results suggest the same as our previous paper, namely, that $F$ might not be amenable. A recent experimental work by Elder, Rechnitzer and Janse~van~Reusburg \cite{ElderRechnitzerJanse} on the amenability problem of $F$ using statistical methods arrives also to the same conclusion that $F$ might be non-amenable (see also \cite{ElderRogers}). 

As in \cite[Section 2]{HaagerupRamirezSolano},  by the norm of $||a||$  of an element $a$ in the group ring of a discrete group $\Gamma$ we mean
$$||a||=||\lambda(a)||_{B(\ell^2(\Gamma))},$$
where $\lambda$ is the left regular representation of $\Gamma$. We continue using the standard convention of writing  $a\in B(\ell^2(\Gamma))$ instead of $\lambda(a)\in B(\ell^2(\Gamma))$ for any $a\in \C \Gamma$.
Our starting point is the following theorem (see Section \ref{s:amenability} for more details) 
\begin{thm}Let $C,D$ be the generators of T, whose graphs are shown in Fig. \ref{f:generatorsCDofT}, and let I denote the unit element of $T$.  Then
$$\sqrt2+\sqrt3\,<\,\frac1{\sqrt{12}}||(I+C+C^2)(I+D+D^2+D^3)||\,\le\, 2+\sqrt2.$$
Moreover, the upper bound is attained if the Thompson group $F$ is amenable.
\end{thm}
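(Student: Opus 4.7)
My plan has two parts, since the lower and upper bounds require very different techniques. Let $x := (I+C+C^2)(I+D+D^2+D^3)$. Writing $p := (I+C+C^2)/3$ and $q := (I+D+D^2+D^3)/4$, one verifies using $C^3=D^4=e$ that $p$ and $q$ are orthogonal projections in $L(T)$, projecting onto the subspaces of $\ell^2(T)$ fixed by $C$ and $D$ respectively. Then $x=12\,pq$, so $\|x\|/\sqrt{12}=\sqrt{12}\,\|pq\|$ and the theorem reduces to
$$\frac{(\sqrt{2}+\sqrt{3})^2}{12}\;<\;\|pq\|^2=\|pqp\|\;\le\;\frac{1}{2}+\frac{\sqrt{2}}{3}.$$

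For the lower bound I would invoke the standard moment inequality $\|y\|^{2n}\ge\tau((y^*y)^n)$ in the tracial von Neumann algebra $L(T)$, applied to $y=x$. Here $x^*x=Q^*P^*PQ=3\,QPQ$ (using $P^*=P$, $P^2=3P$), so $\tau((x^*x)^n)=3^n\tau((QPQ)^n)$ reduces to counting closed words in $C,D$ of length $4n$ drawn from the alphabet of $QPQ$ (a fixed sum of $48$ group elements). Using the polynomial-time algorithms for deciding when a word in $C,D$ equals the identity in $T$ (Section \ref{s:amenability}), one computes these moments explicitly for a suitable $n$ and verifies $(\tau((x^*x)^n))^{1/2n}>\sqrt{12}(\sqrt{2}+\sqrt{3})$. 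As indicated in the abstract, the sharper ``cyclic reduced'' moments $\tau(((C+C^2)(D+D^2+D^3))^n)=\tau(((P-I)(Q-I))^n)$ together with the spectral-distribution techniques of \cite{HaagerupRamirezSolano} should yield tighter numerical bounds than the naive moment inequality.

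For the upper bound, which is the conceptually harder direction, I would invoke the theorem of Haagerup--Olesen \cite{HO} that relates the amenability of $F$ to representation-theoretic properties of $T$. I expect \cite{HO} to supply an auxiliary representation $\pi$ of $T$ (plausibly related to a quasi-regular representation $\ell^2(T/H)$ for some subgroup $H$ adapted to $F$, or to a specific quotient of $C^*_r(T)$) for which $\|\pi(x)\|/\sqrt{12}=2+\sqrt{2}$ can be computed unconditionally, and such that $\pi$ is weakly contained in the regular representation $\lambda_T$ if and only if $F$ is amenable. The unconditional inequality $\|x\|/\sqrt{12}\le 2+\sqrt{2}$ would then come from comparing $\lambda_T(x)$ with $\pi(x)$ at the operator-norm level (for instance via a majorization $\pi\otimes\lambda_T\supset\lambda_T$ or a containment of $\lambda_T$ in a tensor amplification of $\pi$), while amenability of $F$ yields the reverse containment $\pi\prec\lambda_T$, forcing equality.

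The main obstacle is extracting from \cite{HO} the precise representation or quotient and explaining why the specific constant $2+\sqrt{2}$ arises — this is exactly the bridge that converts numerical norm estimates in $\C T$ into a genuine amenability test for $F$. The lower bound is conceptually routine once the word-problem algorithms of Section \ref{s:amenability} are in place; its difficulty is purely computational, in carrying out the moment sums for $n$ large enough to exceed $(\sqrt{12}(\sqrt{2}+\sqrt{3}))^{2n}$.
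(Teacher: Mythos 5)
There is a genuine gap in your treatment of the upper bound. The unconditional inequality $\frac{1}{\sqrt{12}}\|x\|\le 2+\sqrt2$ does \emph{not} come from the Haagerup--Olesen representation or from any majorization of the form $\pi\otimes\lambda_T\supset\lambda_T$ (no such containment is available, and weak containment of $\lambda$ in something built from $\pi$ would be a very strong and unproved statement). The paper's mechanism is different and more elementary: introduce the third projection $P:=\frac{I+D^2}{2}$, the average over $\langle D^2\rangle\cong\Z_2$. Since every vector fixed by $D$ is fixed by $D^2$, one has $R\le P$ where $R:=\frac{I+D+D^2+D^3}{4}$, hence $\|QR\|^2=\|QRQ\|\le\|QPQ\|=\|PQ\|^2$. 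The point is that $\langle C,D^2\rangle\cong\Z_3\star\Z_2\cong PSL(2,\Z)$ sits inside $T$, so $C_r^*(\Z_2\star\Z_3)\subset C_r^*(T)$ canonically and $\|PQ\|=\frac{1+\sqrt2}{\sqrt6}$ is computed exactly in the free product (Theorem \ref{t:freemeasure} / \cite{ABHaagerup}); multiplying by $\sqrt{12}$ gives $2+\sqrt2$. Your instinct about \cite{HO} is correct only for the conditional statement: amenability of $F$ implies the representation $\pi$ of $T$ on $\ell^2(\Z[\tfrac12]\cap[0,1))$ (which lands in the Cuntz algebra $\fO_2$) is weakly contained in $\lambda$, whence $\|\pi(QR)\|\le\|QR\|$; one then computes $\|\pi(QR)\|=\frac{1+\sqrt2}{\sqrt6}$ exactly using the isometry $\frac{1}{\sqrt2}(S_1+S_2)$, whose spectrum is the full unit disc. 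Without the $R\le P$ trick and the identification of the $PSL(2,\Z)$ subgroup, you have no route to the unconditional bound, and without identifying $\pi$ concretely you cannot produce the value $2+\sqrt2$.

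The lower bound as you propose it would also fail in practice. The naive root test $m_n^{1/2n}\le\|h\|$ converges far too slowly: with all $28$ computed moments one only gets $m_{28}^{1/56}\approx 2.86$, and even the ratio test gives $\approx 3.105$, both well below $\sqrt2+\sqrt3\approx 3.146$. The paper needs the orthogonal-polynomial machinery of \cite[Section 4]{HaagerupRamirezSolano}: from the moments one forms the Jacobi coefficients $\alpha_n$ and the tridiagonal matrices $M_n$, and the increasing sequence $\|M_n\|$ exceeds $\sqrt2+\sqrt3$ already at $n=16$. So the ``sharper techniques'' you relegate to an aside are not an optimization but the only feasible way to certify the strict lower bound from a computable number of moments; the cyclic reduced numbers $\zeta_n$ and Lemma \ref{l:zeta_se_zetarel} are likewise essential just to make the computation of $m_n$ up to $n=28$ tractable.
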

In comparison, one gets for the free product $\Z_3\star\Z_4$ on two generators $c\in\Z_3, d\in\Z_4$ that
\begin{equation}\label{e:hfree}
\frac1{\sqrt{12}}||(e+c+c^2)(e+d+d^2+d^3)||\,=\,\sqrt2+\sqrt3.
\end{equation} 
(cf.~Section \ref{s:free}).
Given a discrete group $\Gamma$, let $L(\Gamma)$ denote its  von Neumann algebra. That is, $L(\Gamma)$ is the von Neumann algebra in $B(\ell^2(\Gamma))$ generated by $\Gamma$. Then we can define the normal faithful tracial state $\tau$ on $L(\Gamma)$ by
$$\tau(h):=\langle h\delta_e,\delta_e\rangle, \quad h\in L(\Gamma).$$
 (see e.g. \cite[Section 6.7]{KadRin}). 
 Moreover, we can express the norm of $h$ in terms of the moments
$$m_n(h^*h):=\tau((h^*h)^n),\qquad n\in\N_0,$$
 namely, (cf.~\cite[Section 4]{HaagerupRamirezSolano})
 $$||h||=||h^*h||^{1/2}=\lim_{n\to\infty}\tau((h^*h)^n)^{\frac1{2n}}.$$
 The challenge is to compute all the moments $m_n(h^*h)$, and in practice we can only compute a finite number of them.
In this paper, we were able to compute the moments $m_n(h^*h)$, $n=0,\ldots,28$ for the element 
$$h=\frac1{\sqrt{12}}(I+C+C^2)(I+D+D^2+D^3),$$
using efficient methods, both mathematically and computationally.
This procedure can be adapted to elements in a discrete group $\Gamma$ that are expressed similarly.
Define the self-adjoint operator
$$\tilde h:=\left(
\begin{array}{cc}
 0 & h^* \\
 h & 0 \\
\end{array}
\right)\in M_2(B(\ell^2(T))),
$$
and let $\tilde \tau :=\tau\otimes \tau_2$, where $\tau_2:=\frac12Tr$ on $M_2(\C)$.
Then 
$$\Lambda:C(\sigma(\tilde h))\to \C$$
given by $\Lambda(f):=\tilde \tau(f(\tilde h))$ is a positive functional on $C(\sigma(\tilde h))$ such that $\Lambda(1)=1$.
Hence there is a unique Borel probability measure for which
$$\tilde\tau(f(\tilde h)) =\int_{\sigma(\tilde h)} f\,d\mu,\quad \forall f\in C(\sigma(\tilde h))$$ 

Since $\tilde\tau$ is faithful, the support supp$(\mu)=\sigma(\tilde h)\subset[\,-||h||,\,\,||h||\,]$.
Such measure $\mu$ is invariant under the reflection $t\mapsto -t$ because the odd moments of $\tilde h$ are zero, and it satisfies
$$\int_{\,-||h||}^{||h||} t^{2n} d\mu(t)=m_{2n}(\tilde h)=m_n(h^*h),\qquad n\in\N_0$$
(cf.~\cite[Section 2]{HaagerupRamirezSolano}).
Moreover, $\pm||h||\in\mathrm{supp}(\mu)$ by symmetry of $\mu$. 
The theory of orthonormal polynomials applied to this measure  (cf.~\cite[Section 4]{HaagerupRamirezSolano}) together with the moments yield an increasing sequence of numbers converging to $||h||$.
We can calculate the first 28 of these numbers using our computed moments, and a suitable extrapolation of these numbers ($n=0,\ldots,28$) gives
$$3.2016\le\frac1{\sqrt{12}}||(I+C+C^2)(I+D+D^2+D^3)||\approx 3.28.$$ 
We found that this sequence of numbers actually gives much better lower bounds than the sequence of ``roots" and ``ratios" of the moments that also converge to the norm (cf.~Section \ref{s:results}). 
Furthermore, we also estimate the Lebesgue density of $\mu$ with fairly high precision, which shows that the measure $\mu$ is very close to zero on the interval $[3.22,2+\sqrt2]$. However, we cannot rule out that the measure has very ``thin tail" stretching all the way up to $2+\sqrt2$.

 In comparison, one gets  Eq.~(\ref{e:hfree}) when one considers the free product $\Z_3\star \Z_4$ on two generators $c\in\Z_3,d\in\Z_4$.
The measure $\mu$ based on $c,d$ instead of $C,D$ will be denoted by $\mu_{\mathrm{free}}$, and it can be computed explicitly (See Section \ref{s:free})
\begin{equation}\label{e:mufree}
\mu_{\mathrm{free}}=\frac{1}{2\pi}\frac{\sqrt{24-(x^2-5)^2}}{x(12-x^2)}1_{[-\sqrt2-\sqrt3,\sqrt2-\sqrt3]\cup[\sqrt3-\sqrt2,\sqrt2+\sqrt3]}(x)\,dx+\frac34\delta_0.
\end{equation}

In our previous paper, we estimated the norm $3.60613\le||A+B+A^{-1}+B^{-1}||\approx 3.87$, where $A,B$ are the standard generators of $F$, by using the first 24 moments of $(A+B+A^{-1}+B^{-1})^2$. Elvey-Price \cite{EP} succeeded in computing 7 more moments, which pass the test of \cite[Theorem B.1]{HaagerupRamirezSolano} for possible computational errors.
 Using these 31 moments, the updated estimated norm remains unchanged, while the new lower bound is
$$3.64271\le||A+B+A^{-1}+B^{-1}||\approx 3.87,$$
with a very likely lower bound of $3.70211$, based on the same list of moments.

In comparison, 
$$3.7873\le||C+D+C^{-1}+D^{-1}||\approx 3.84,$$
which suggests that the actual norm is also much closer to $4$, even though it cannot be $4$ because the Thompson group $T$ is not amenable \cite{BrinS}. This norm is estimated in the same way as we did with the norm $||A+B+A^{-1}+B^{-1}||$ (cf.~Appendix \ref{a:normCDCinvDinv}). The composition of words in $C$, $D$ is given in Section \ref{s:amenability}, and it was inspired by the Belk and Brown forest algorithm in \cite{BelkBrownForestDiagrams} (see also \cite{Burillo}).

\section{\textbf{Action of $C$,$D$, the cyclic reduced numbers $\zeta_n$, and amenability.}}\label{s:amenability}
Since the elements of the Thompson group $T$ are piecewise linear functions with breakpoints on the diadics, it makes sense to introduce the set of diadic intervals
$$\fD:=\{[\frac{k}{2^n} , \frac{k + 1}{2^n} ] \mid n \in \N_0 , k = 0,\ldots , 2^n-1\}.$$
Note that the unit interval is the largest diadic interval, and cutting a diadic interval into two equal halves gives two new diadic intervals. Then we write the elements of $T$ as functions $\fD\to\fD$.
For instance, the element $C$ whose graph is in Figure \ref{f:generatorsCDofT} is determined by the function 
$$f_{C}:\{d_1, d_2, d_3\}\to \{r_1,r_2,r_3\}$$
given by
$$f_C(d_1)=r_3\qquad f_C(d_2)=r_1\qquad f_C(d_3)=r_2,$$
where the diadic intervals
$d_1:=[0,\frac12]$, $d_2:=[\frac12,\frac34]$, $d_3:=[\frac34,1]$ in this case are obtained from the breakpoints of $C$ and note that they only overlap on their boundaries. Similarly $r_1:=[0,\frac12]$, $r_2:=[\frac12,\frac34]$, $r_3:=[\frac34,1]$ are obtained from the breakpoints of $C^{-1}$.
 
Observe that $\fD$ is not closed under union, e.g. the union of the diadic intervals $[\frac14,\frac12],[\frac12,\frac34]\in\fD$ is the interval $[\frac14,\frac34]\not\in\fD$ which is not diadic. 
This asymmetry is used to construct, for a given $x\in T$, a so-called domain tree and range tree with vertices in $\fD$. Namely for the domain tree, we start with a subset $L$ of diadic intervals of $\fD$ which only intersect on their boundaries and whose union is the unit interval and where the set of boundary points contains all the breakpoints of $x$. We then represent each diadic interval with a dot, and we join two of these dots with a caret whenever the union is also a diadic interval provided that the two intervals only overlap on their boundaries. The tip of the caret is the joined diadic interval. We do this recursively until we get the unit interval. For instance, in our example, for the domain of $f_C$, we can join $d_2$ and $d_3$ with a caret to obtain the diadic interval $[1/2,1]$ and then we join this and $d_1$ with a caret to obtain the diadic interval $[0,1]$. The domain tree of $C$ is illustrated in Figure \ref{f:CDcomposition1}. To get the corresponding range tree we do the same construction for the diadic intervals $x(L):=\{x(d)\mid d\in L\}$.
\begin{figure}
\includegraphics[scale=.25]{./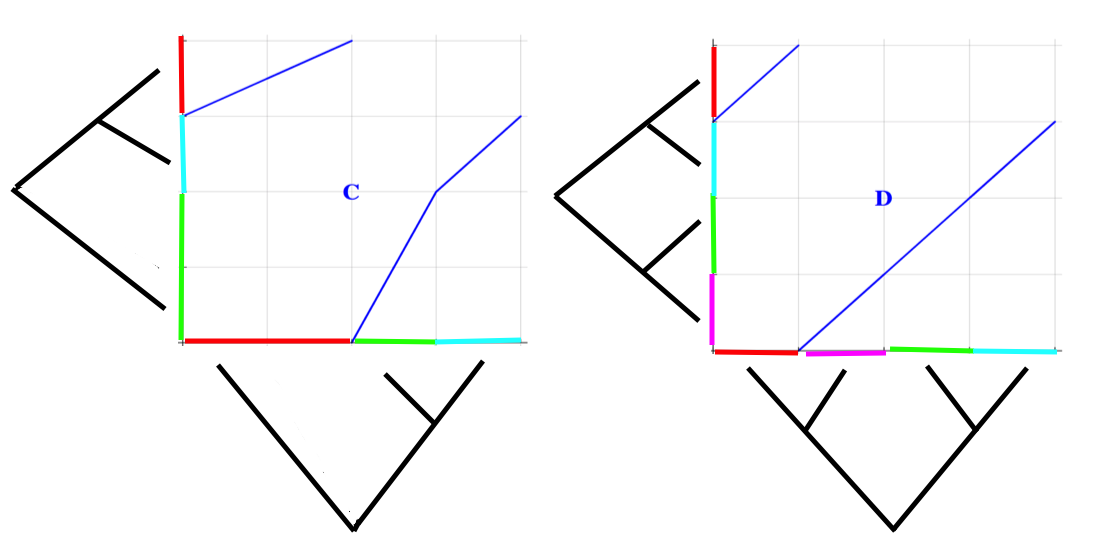}
  \caption{Construction of the doubletrees for the generators $C,D$ of the Thompson group $T$.}\label{f:CDcomposition1}
\end{figure}
\begin{figure}
\includegraphics[scale=.2]{./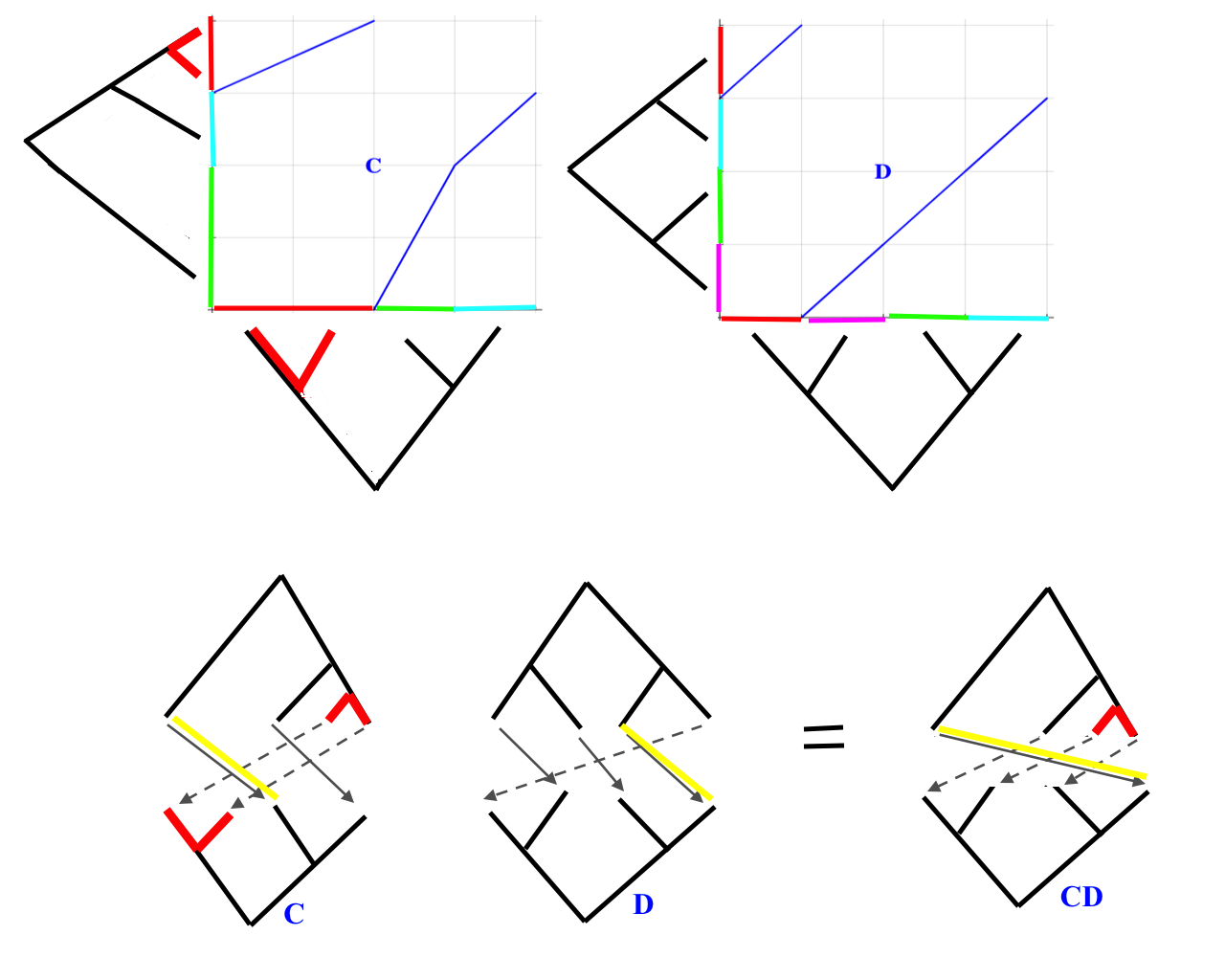}
  \caption{Doubletree composition.}\label{f:CDcomposition2}
\end{figure}

The domain tree (resp.~range tree) of $x\in T$ will be denoted with $d(x)$ (resp.~$r(x)$).
\begin{defn}[Doubletree]
The \emph{doubletree} of $x\in T$ is the pair of trees $r(x)$, $d(x)$ together with the permutation (a rotation)  
$\sigma:\mathrm{leaves}(r(x))\to \mathrm{leaves}(d(x))$
of its leaves.
\end{defn}
For instance in our example, the doubletree of $C$ is shown in Figure \ref{f:CDcomposition2}.
Composition of doubletrees is done as if we were composing its corresponding piecewise linear maps. 
However, it might be necessary to subdivide the diadic intervals in domain and range to accomplish this task. 
(This corresponds to replace  a domain leaf and its corresponding range leaf with a double caret).
For instance, if we compose  $C\circ D$ using the doubletrees shown in Figure \ref{f:CDcomposition1}, we see that the range diadic interval $[0,1/4]$ of $D$ is too small to be composed with the domain diadic interval $[0,1/2]$ of $C$. But once we subdivide  $[0,1/2]\in \mathrm{leaves}(d(C))$  into $[0,1/4]$, $[1/4,1/2]$ (and subdivide as well the image under $C$) then the composition can be done. See Figure \ref{f:CDcomposition2}.

A doubletree with no double-carets is said to be \emph{reduced}. 
\subsection{The action of $C$ and $D$ on any element of $T$}
We investigate the doubletree composition $C\circ x$, $x\in T$ in three cases, where the doubletrees of $C$ and $x$ are assumed to be reduced (i.e. no double-carets).\\
\textbf{Case 1C: (non-degenerate)}\\
Assume that the root of the range tree $r(x)$ has a left and a right node. The tree starting at the left node is called tree I. Assume that the right node has a left node and a right node. The tree starting at the right-left node is called tree II, and the one starting at the right-right node is called tree III. The trees I, II, III, can be 0-trees, i.e. they can be leaves.

Then the root of the range tree $r(C\circ x)$ has a left and a right node. The tree starting at the left node is tree II. The right node has a left node and a right node. The tree starting at the right-left node is tree III, and the one starting at the right-right node is tree I.  The leaves of tree I, which map to leaves of the domain tree $d(x)$, stay with the tree, and the same holds for the other two trees II, III. See Figure \ref{f:case1C}.
\begin{figure}
\includegraphics[scale=.4]{./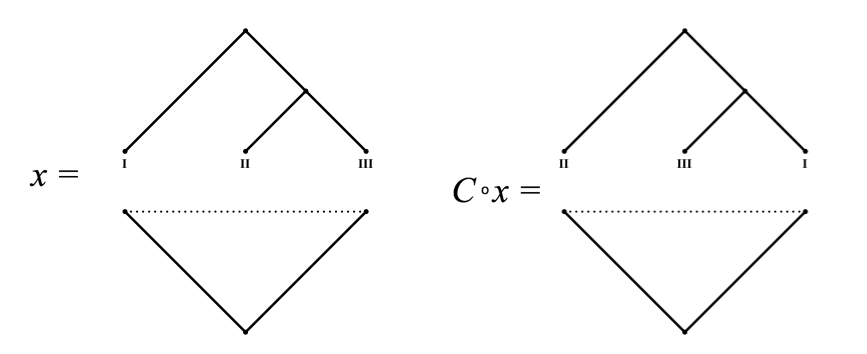}\\
  \caption{Case 1C: Trees I, II, III appear in the range tree of $x\in T$.
  The action of $C$ simply rotates these trees (and each tree keeps its own leaves). 
  }\label{f:case1C}
\end{figure}
The rotation permutation starts with $\sigma(C\circ x)(1)=\sigma(x)(\sigma(C)(1))$.

Reduction occurs only if tree III and tree I are 0-trees and the leaves $\sigma_{III}$, $\sigma_{I}$ on the domain form a caret. Reduction occurs again only if tree II is also a 0-tree; in such case we have the identity.
An example of case 1C, is shown in Figure \ref{f:case1Cexample}.\\
\begin{figure}
\includegraphics[scale=.3]{./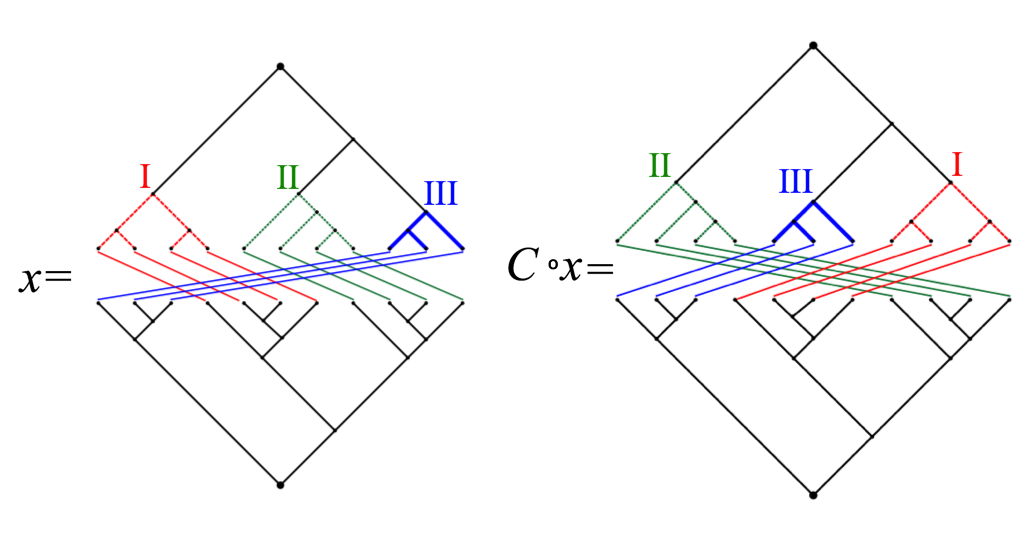}\\
  \caption{ Composition $C\circ x$, where $x\in T$ satisfies the hypothesis of case 1C (non-degenerate). 
  }\label{f:case1Cexample}
\end{figure}
\textbf{Case 2C:(degenerate, missing tree II)}\\
Assume that the root of the range tree $r(x)$ has a left and a right node. Assume that the right node is a 0-tree (a leaf), which maps to another leaf of the domain tree $d(x)$. Insert a double caret at these two leaves and proceed as in case 1C. An example for this case is shown in Figure \ref{f:case2Cexample}.\\
\begin{figure}
\includegraphics[scale=.3]{./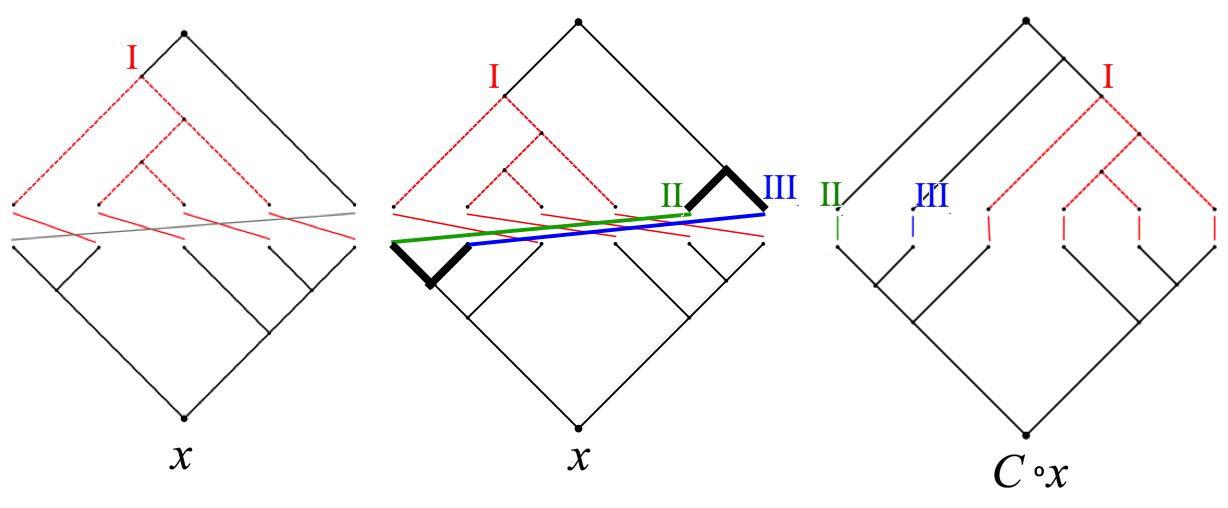}\\
  \caption{ Composition $C\circ x$, where $x\in T$ satisfies the hypothesis of case 2C (degenerate). 
  }\label{f:case2Cexample}
\end{figure}
\textbf{Case 3C:(degenerate, missing trees, I,II,III)}\\
The element $x$ is the identity. Thus $C\circ x=C$.\\\\

The composition $D\circ x$, $x\in T$ is done in 5 cases, where the trees of $D$ and $x$ are assumed to be reduced.\\
\textbf{Case 1D: (non-degenerate)}\\
Assume that the root of the range tree $r(x)$ has a left and a right node, which both have also a left node and a right node.
The tree starting at the left-left node is called tree I. The one starting at the left-right node is called tree II. The one starting at the right-left node is called tree III, and the one starting at the right-right node is called tree IV. The trees I, II, III, IV can be 0-trees, i.e. they can be leaves.

Then, the root of the range tree $r(D\circ x)$ has a left and a right node, which both have also left and right nodes.
The tree starting at the left-left node is tree II. The tree starting at the left-right node is tree III.
The tree starting at the right-left node is tree IV. The tree starting at the right-right node is tree I.
The leaves of tree I, which map to leaves of the domain tree $d(x)$, stay with the tree, and the same holds for the other three trees II, III, IV. See Figure \ref{f:case1D}.
The rotation permutation starts with $\sigma(D\circ x)(1)=\sigma(x)(\sigma(D)(1))$.
\begin{figure}
\includegraphics[scale=.35]{./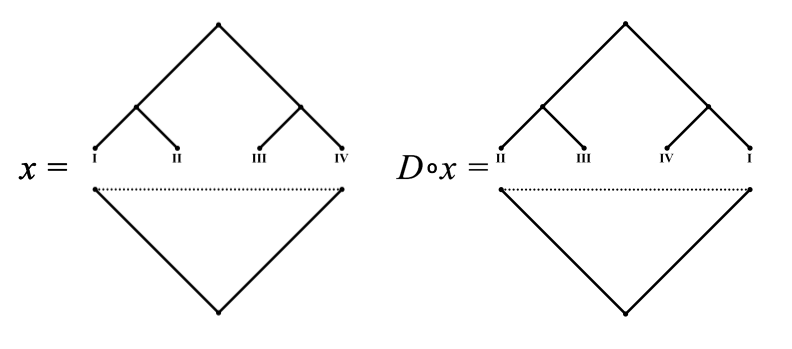}\\
  \caption{Case 1D: Trees I, II, III, IV appear in the range tree of $x\in T$.
  The action of $D$ simply rotates these trees (and each tree keeps its own leaves). 
  }\label{f:case1D}
\end{figure}

Reduction occurs only in the following two cases:
\begin{itemize}
\item [($a$)] II and III are 0-trees and the leaves $\sigma_{\mathrm{II}},\sigma_{\mathrm{III}}$ on the domain form a caret.
\item [($b$)] IV and I are 0-trees and the leaves $\sigma_{\mathrm{IV}},\sigma_{\mathrm{I}}$ on the domain form a caret.
\end{itemize}
Reduction occurs again, only if both ($a$) and ($b$) occur; in such case we have the identity.
An example of case 1D with reduction ($a$) is shown in Figure \ref{f:case1Dexample}.\\
\begin{figure}
\includegraphics[scale=.3]{./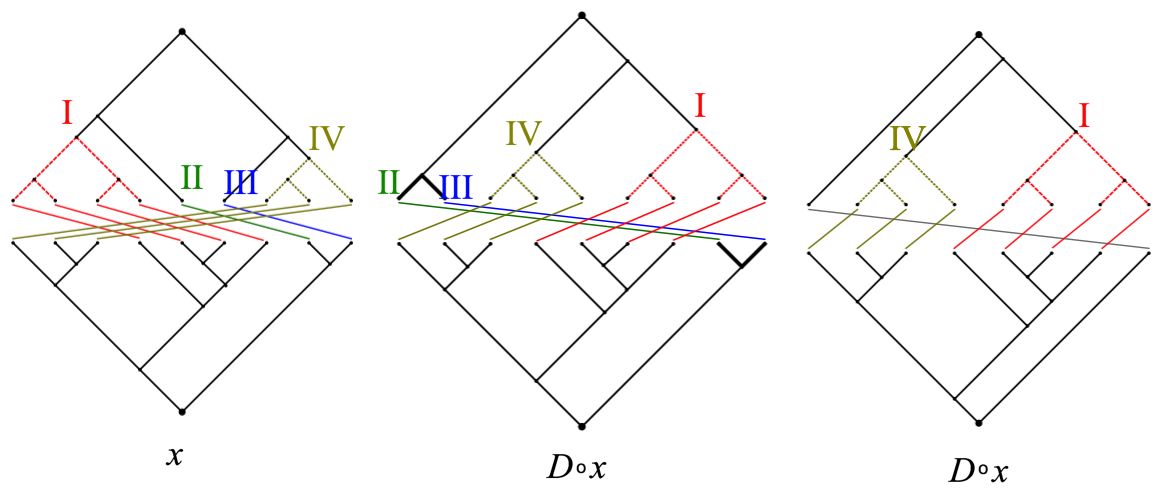}\\
  \caption{ Composition $D\circ x$, where $x\in T$ satisfies the hypothesis of case 1D (non-degenerate). 
  }\label{f:case1Dexample}
\end{figure}
\textbf{Case 2D: (degenerate, missing tree II)}\\
Assume that the root of the range tree $r(x)$ has a left and a right node. Assume that the left node is a leaf which we call $v$, while the right node has a left and a right node.  The leaf $v$ maps to a leaf of the domain tree $d(x)$. Insert a double caret at these two leaves. Then proceed as in case 1D. An example is shown in Figure \ref{f:case2Dexample}.\\
\begin{figure}
\includegraphics[scale=.3]{./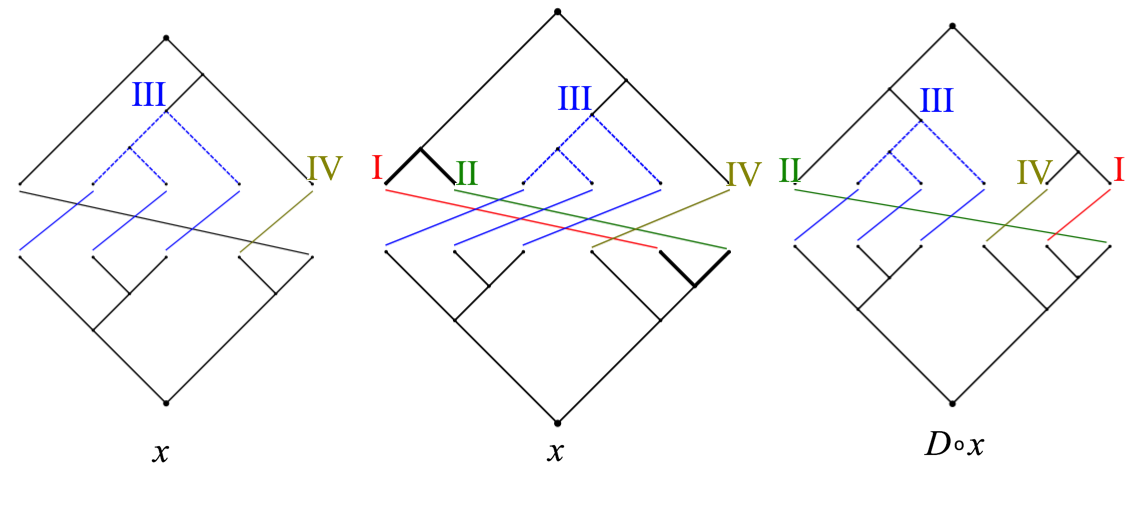}\\
  \caption{ Composition $D\circ x$, where $x\in T$ satisfies the hypothesis of case 2D (degenerate). 
  }\label{f:case2Dexample}
\end{figure}
\textbf{Case 3D: (degenerate, missing tree III)}\\
Assume that the root of the range tree $r(x)$ has a left and a right node. Assume that the right node is a leaf which we call $v$, while the left node has a left and a right node.  The leaf $v$ maps to a leaf of the domain tree $d(x)$. Insert a double caret at these two leaves. Then proceed as in case 1D. An example is shown in Figure 
\ref{f:case3Dexample}.\\
\begin{figure}
\includegraphics[scale=.3]{./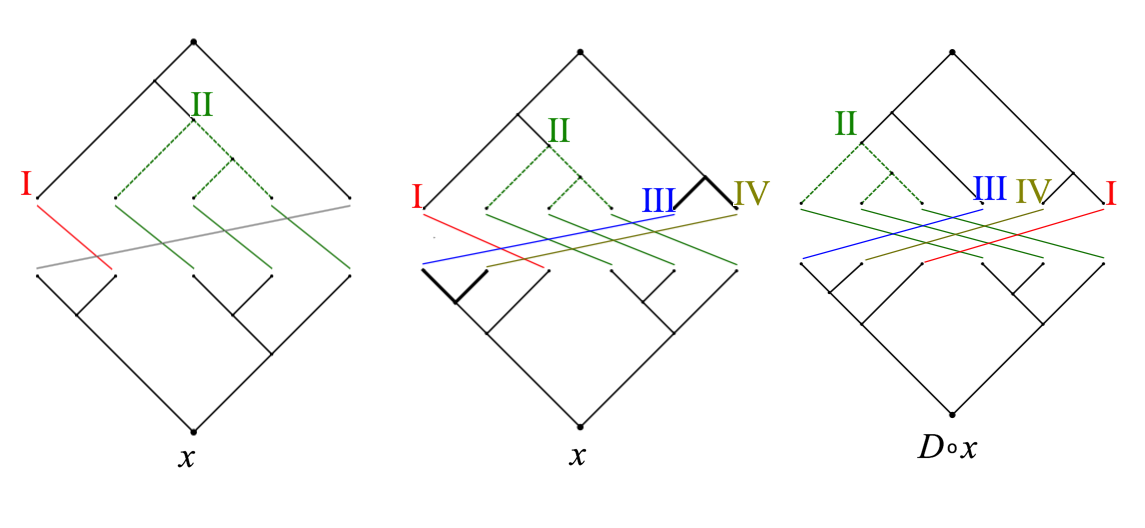}\\
  \caption{ Composition $D\circ x$, where $x\in T$ satisfies the hypothesis of case 3D (degenerate). 
  }\label{f:case3Dexample}
\end{figure}
\textbf{Case 4D: (degenerate, missing trees II, III)}\\
Assume that the root of the range tree $r(x)$ has a left and a right node, such that both trees are 0-trees.
Since $x$ is assumed to be reduced, $x=D^2$. Thus $D\circ x= D^3=D^{-1}$.\\
\textbf{Case 5D: (degenerate, missing trees I, II, III, IV)}\\
The element $x$ is the identity. Thus $D\circ x=D$.\\

\subsection{The cyclic reduced numbers $\zeta_n$}
Let $R:T\to T$, be defined by
$$R(x)(t) = 1-x(1-t), \quad   x\in T,\, t\in\R/\Z.$$ 
 Then the graph of $R(x)$ is the graph of $x$ rotated by 180 degrees. The rotation map $R$ is a group isomorphism, and
 \begin{equation}\label{e:CinvDinvReflector}
 R\circ R= id,\qquad C^{-1}=D^2 R(C) D^2, \qquad D^{-1}=D^2R(D) D^2.
 \end{equation}
 The doubletree of $R(x)$, $x\in T$, is obtained by swapping all the nodes of every caret in the domain and range trees of $x$, i.e. left node (resp.~right node) becomes right node (resp.~left node). An example is shown in Figure 
\ref{f:Rx-example}.\\
\begin{figure}
\includegraphics[scale=.4]{./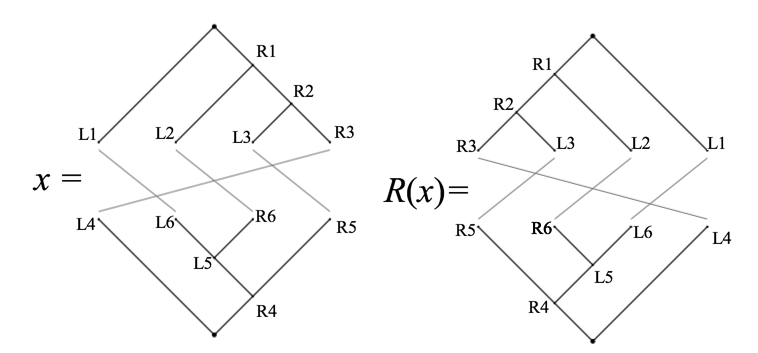}\\
  \caption{ The rotation operator $R$ on doubletrees swaps the nodes of every caret in the domain and range trees.  
  }\label{f:Rx-example}
\end{figure}
Define
$$a:=C+C^2\qquad b:=D+D^2+D^3,$$
and recall that $C^3=I$, $D^4=I$. Since $C, D\in L(T)$ are unitaries, $C^*=C^{-1}$ and $D^*=D^{-1}$. Moreover, $a$ and $b$ are self-adjoint, i.e. $a^*=a$ and $b^*=b$.
Define the ``cyclic reduced" numbers
$$\zeta_n:=\tau((ab)^n),\quad n\in\N.$$
Define $N_0:=\{D,D^2,D^{-1}\}$. For $n\in\N$ let,
\begin{eqnarray*}
N_n&:=&\{(t_0,s_1,t_1,\ldots,s_n,t_{n})\mid s_j\in\{C,C^{-1}\},\,j=1,\ldots,n,\,\,\\
&& \qquad\qquad\qquad \qquad \qquad \,\,t_i\in\{D,D^2,D^{-1}\},\,i=0,\ldots,n\},
\end{eqnarray*}
and note that 
$$ b(ab)^n=\sum_{(t_0,s_1,t_1,\cdots s_n,t_{n})\in N_n} t_0(s_1t_1\cdots s_nt_{n}),\qquad n\in\N_0.$$
We define the product of a tuple as follows
\begin{equation}\label{e:letterproduct}
\pi((t_1,t_2,\ldots, t_n)):=t_1t_2\cdots t_{n},\qquad\quad t_1,\ldots,t_n\in\{C,D,C^{-1},D^2,D^{-1}\},
\end{equation}
and the inverse of each entry of a tuple as
\begin{equation}\label{e:letterinverse}
J((t_1,t_2,\ldots, t_{n})):=(t_1^{-1},t_2^{-1},\ldots,t_n^{-1}).
\end{equation}
For $w=(t_1,\ldots, t_{n})$ it holds by Eq.~(\ref{e:CinvDinvReflector})
\begin{equation}\label{e:ri_EQ_D2_R_D2}
\pi(J(w))=D^2 R(\pi(w)) D^2.
\end{equation}
We say that the word $\pi(J(w))$ is the ``reverse-inverse" of the word
$\pi(w)$. For example, if $w=(C,D,C^2)$ then $\pi(J(w))=C^{-1}D^{-1}C^{-2}$, which corresponds to first reversing the letters of the word $CDC^2$ and then taking the inverse. 
Define for $n\in\N$,
\begin{eqnarray*}
S_n&:=&\{w, J(w')\mid w,w'\in \{C\}\times N_{n-1} ,\,\, \pi(w)<\pi(J(w)),\,\, \pi(J(w'))<\pi(w')\}\\
S'_n&:=&\{w, J(w')\mid w,w'\in N_{n-1}\times\{ C^{-1}\} ,\,\, \pi(w)<\pi(J(w)),\,\,\pi(J(w'))<\pi(w')\}\\
E_n&:=&\{ w\in \{C \}\times N_{n-1} \mid \pi(w)=\pi(J(w))\}.\\
E'_n&:=&\{ w\in N_{n-1}\times \{C^{-1}\} \mid \pi(w)=\pi(J(w))\}.
\end{eqnarray*}
Here the comparison $\pi(w)<\pi(J(w))$ is done by comparing the lexicographic order of their corresponding serialized (e.g. preorder) reduced doubletrees.
The letter $S_n$ stands for smaller, e.g. that the word $\pi(w)$ is smaller than its reverse-inverse $\pi(J(w))$.
Similarly, the letter $E_n$ stands for equal.
 Since $b(ab)^n$ is self-adjoint,  $w\in \{C\}\times N_n$ if and only if $w^{-1}\in N_n\times \{C^{-1}\}$, where $w^{-1}$ is the inverse of each entry in the tuple $w$ and in reverse order.
Define for $n\in\N$,
\begin{equation}\label{e:zeta_se_even}
\zeta_{2n}^{s}:=\sum_{(x,y)\in S_n\times S'_n} \langle x,y\rangle,\qquad \zeta_{2n}^{e}:=\sum_{(x,y)\in E_n\times E'_n} \langle x,y\rangle
\end{equation}
\begin{equation}\label{e:zeta_se_odd}
\zeta_{2n+1}^{s}:=\sum_{(x,y)\in S_{n}\times S_{n+1}'} \langle x,y\rangle,\qquad \zeta_{2n+1}^{e}:=\sum_{(x,y)\in E_{n}\times E'_{n+1}} \langle x,y\rangle,
\end{equation}
where $\langle x,y\rangle:=\tau(y^*x)$ is the inner product on $L(T)$ associated with the trace $\tau$. 
Then
\begin{lem}\label{l:zeta_se_zetarel}
$$
\zeta_{n}=2\zeta_{n}^s+4 \zeta_{n}^e,\quad n\ge2.\\
$$
\end{lem}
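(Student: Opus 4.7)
The plan is to interpret $\zeta_n=\tau((ab)^n)$ as a count of pairs of alternating tuples with coinciding $\pi$-values, and then to exploit the tuple-level involution $J$ together with Eq.~(\ref{e:ri_EQ_D2_R_D2}). Splitting $(ab)^{2n}=(ab)^n(ab)^n$ and expanding each factor as $(ab)^n=\sum_{u'\in\{C,C^{-1}\}\times N_{n-1}}\pi(u')$, reparametrize the second factor by $u'\mapsto v$ where $v$ is obtained from $u'$ by reversing the entries and inverting each; this gives $\pi(v)=\pi(u')^{-1}$ and makes $v$ range over all length-$2n$ alternating tuples ending in $C^{\pm 1}$. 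Since $\tau(gh)=[gh=e]$, one obtains
$$
\zeta_{2n}=\sum_{(u,v)}\langle u,v\rangle=|\{(u,v):\pi(u)=\pi(v)\}|,
$$
with $u$ ranging over tuples starting in $C^{\pm 1}$ and $v$ over tuples ending in $C^{\pm 1}$, both of length $2n$. The odd-index case $\zeta_{2n+1}=\tau((ab)^n(ab)^{n+1})$ is the same count with $v$ of length $2(n+1)$.

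Next, partition each tuple side by comparing $\pi(w)$ with $\pi(J(w))$. Using $J(\{C\}\times N_{n-1})=\{C^{-1}\}\times N_{n-1}$, the classes defined by $\pi(w)<\pi(J(w))$, $\pi(w)=\pi(J(w))$, and $\pi(w)>\pi(J(w))$ are precisely $S_n$, $E_n\sqcup J(E_n)$, and $J(S_n)$ on the $u$-side, and analogously $S'_n$, $E'_n\sqcup J(E'_n)$, $J(S'_n)$ on the $v$-side. The crucial observation is that Eq.~(\ref{e:ri_EQ_D2_R_D2}) identifies $\pi\circ J$ with the involutive automorphism $g\mapsto D^2R(g)D^2$ of $L(T)$. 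Hence whenever $\pi(u)=\pi(v)$ one also has $\pi(J(u))=\pi(J(v))$, which forces the class of $u$ and of $v$ to coincide; the count therefore splits as
$$
\zeta_{2n}=N(S_n,S'_n)+N(J(S_n),J(S'_n))+N(E_n\sqcup J(E_n),\,E'_n\sqcup J(E'_n)),
$$
where $N(X,Y):=|\{(u,v)\in X\times Y:\pi(u)=\pi(v)\}|$.

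Finally, collapse each block. The tuple-level involution $(u,v)\mapsto(J(u),J(v))$ preserves $\pi$-equality (again by Eq.~(\ref{e:ri_EQ_D2_R_D2})) and swaps the ``$<$'' and ``$>$'' classes, so $N(J(S_n),J(S'_n))=N(S_n,S'_n)=\zeta_{2n}^s$, producing the coefficient $2$. For the equality block, $J$ restricts to $\pi$-preserving bijections $E_n\leftrightarrow J(E_n)$ and $E'_n\leftrightarrow J(E'_n)$ (this is the defining property of $E_n$ and $E'_n$), so each of the four sub-blocks $E_n\times E'_n$, $E_n\times J(E'_n)$, $J(E_n)\times E'_n$, $J(E_n)\times J(E'_n)$ contributes $N(E_n,E'_n)=\zeta_{2n}^e$, producing the coefficient $4$. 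Summing gives the even case $\zeta_{2n}=2\zeta_{2n}^s+4\zeta_{2n}^e$; the odd case follows identically with $S'_{n+1},E'_{n+1}$ in place of $S'_n,E'_n$. The main conceptual step is the block-disjointness coming from Eq.~(\ref{e:ri_EQ_D2_R_D2}); once this is established, the factors $2$ and $4$ are pure $J$-orbit combinatorics.
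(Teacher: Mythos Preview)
Your argument is correct and follows essentially the same route as the paper's proof: both partition the tuple sets $X_n$ and $X'_n$ according to the trichotomy $\pi(w)\lessgtr\pi(J(w))$, use Eq.~(\ref{e:ri_EQ_D2_R_D2}) to show that only diagonal blocks contribute, and then invoke $J$-equivariance to obtain the factors $2$ and $4$. Your presentation is slightly more combinatorial (counting pairs with $\pi(u)=\pi(v)$ directly, and splitting the $E$-block into four equal sub-blocks), whereas the paper first collapses $E_n\sqcup J(E_n)$ to $2\sum_{E_n}\pi(x)$ inside the group-algebra expansion and then takes the inner product; but these are cosmetic differences over the same idea.
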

\begin{proof}
For $n\in\N$, define
$$X_n:=\{(s_1,t_1,\cdots s_n,t_n)\mid s_i\in\{C,C^{-1}\}, t_i\in\{D,D^2,D^{-1}\},i=1,\ldots,n\}.$$
Then
$$(ab)^n=\sum_{x\in X_n}\pi(x).$$
Since $(ab)^n=Cb(ab)^{n-1}+C^{-1}b(ab)^{n-1}$, we can partition $X_n$ as follows
\begin{eqnarray*}
X_n&=&\{w,J(w)\mid w\in \{C\}\times N_{n-1}\}\\ 
 &=&S_n\sqcup L_n\sqcup E_n\sqcup J(E_n), 
\end{eqnarray*}
where 
$$L_n:=\{w, J(w')\mid w,w'\in \{C\}\times N_{n-1},\,\, \pi(w)>\pi(J(w)), \,\,\pi(J(w'))>\pi(w')\}.$$
Thus 
$$(ab)^n=\sum_{x\in S_n} \pi(x)+\sum_{x\in L_n}\pi(x)+2\sum_{x\in E_n} \pi(x).$$
Similarly,
$$(ba)^n=\sum_{x\in S'_n} \pi(x)+\sum_{x\in L'_n}\pi(x)+2\sum_{x\in E'_n} \pi(x),$$
where
$$L'_n:=\{w, J(w')\mid w,w'\in N_{n-1}\times \{C^{-1}\} ,\,\, \pi(w)>\pi(J(w)), \,\,\pi(J(w'))>\pi(w')\}.$$
The inner product  $\langle \sum_{S_n} \pi(x),  \sum_{E'_n}\pi(x)\rangle =0$ because if 
there is a $w\in S_n, w'\in E'_n$,  such that $\pi(w)=\pi(w')$ then by Eq.~(\ref{e:ri_EQ_D2_R_D2}), $\pi(J(w))=\pi(J(w'))$, and thus
$\pi(w)< \pi(J(w))=\pi(J(w'))=\pi(w')=\pi(w)$, a contradiction.

Similarly, the inner product $\langle \sum_{S_n} \pi(x),  \sum_{L'_n}\pi(x)\rangle =0$ because if 
there is a $w\in S_n, w'\in L'_n$,  such that $\pi(w)=\pi(w')$ then by Eq.~(\ref{e:ri_EQ_D2_R_D2})
$\pi(w)< \pi(J(w))=\pi(J(w'))<\pi(w')=\pi(w)$, a contradiction.
The innerproducts
$$\langle \sum_{x\in S_n} \pi(x),\sum_{x\in S'_n} \pi(x)\rangle=
\langle \sum_{x\in L_n}\pi(x),\sum_{x\in L'_n}\pi(x)\rangle$$
coincide because $L_n=J(S_n)$, $L'_n=J(S'_n)$ and because by Eq.~(\ref{e:ri_EQ_D2_R_D2}) 
$$\pi(x)=\pi(x')\iff \pi(J(x))=\pi(J(x')).$$
It follows that for $n\in\N$,
\begin{eqnarray*}
\zeta_{2n}&=&\langle (ab)^n, (ba)^n\rangle\\
&=&\langle \sum_{x\in S_n} \pi(x),\sum_{x\in S'_n} \pi(x)\rangle+
\langle \sum_{x\in L_n}\pi(x),\sum_{x\in L'_n}\pi(x)\rangle+
4\langle \sum_{x\in E_n} \pi(x),\sum_{x\in E'_n} \pi(x)\rangle\\
&=&2\langle \sum_{x\in S_n} \pi(x),\sum_{x\in S'_n} \pi(x)\rangle+
4\langle \sum_{x\in E_n} \pi(x),\sum_{x\in E'_n} \pi(x)\rangle\\
&=&2\zeta_{2n}^s+4 \zeta_{2n}^e,
\end{eqnarray*}
and
\begin{eqnarray*}
\zeta_{2n+1}&=&\langle (ab)^n, (ba)^{n+1}\rangle\\
&=&2\langle \sum_{x\in S_n} \pi(x),\sum_{x\in S'_{n+1}} \pi(x)\rangle+
4\langle \sum_{x\in E_n} \pi(x),\sum_{x\in E'_{n+1}} \pi(x)\rangle\\
&=&2\zeta_{2n+1}^s+4 \zeta_{2n+1}^e.
\end{eqnarray*}
\end{proof}
$ $\\

\subsection{Amenability}
\begin{defn}[Amenability]
A discrete group $\Gamma$ is said to be amenable if there exists a finitely additive measure 
$\mu:P(\Gamma)\to[0,1]$ with $\mu(\Gamma)=1$ such that
$$\mu(xA)=\mu(A),\quad \text{for all } A\in P(\Gamma),\,\,  x\in\Gamma,$$
where $P(\Gamma)$ denotes the power set of $\Gamma$.
\end{defn}
$ $\\
\noindent
Let $P,Q,R\in L(T)$ be the projections given by
\begin{equation}\label{e:PQR}
P:=\frac{I+S}{2}\qquad Q:=\frac{I+C+C^2}{3}\qquad R:=\frac{I+D+D^2+D^3}{4},
\end{equation}
where $S:=D^2$.

\begin{thm}\label{t:QRleQP}
Let $P,Q,R\in L(T)$ be the projections in Eq.~(\ref{e:PQR}), which are defined in terms of the generators $C,D$ of Thompson group $T$.
Then
$$||QR||\le \frac{1+\sqrt{2}}{\sqrt{6}}.$$
Moreover, the upper bound is attained if the Thompson group $F$ is amenable.
\end{thm}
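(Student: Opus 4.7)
The plan is to reduce the bound on $\|QR\|$ to a bound on $\|QP\|$ via the comparison $R \leq P$, and then apply the Haagerup--Olesen criterion \cite{HO} together with Voiculescu's formula for the product of two free projections.

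First I would verify directly that $PR = R$. Using $D^4 = I$, the product
\[
(I+D^2)(I+D+D^2+D^3) \;=\; 2(I+D+D^2+D^3)
\]
telescopes, so $PR = R$ and (taking adjoints) $RP = R$ as well. Hence $R \leq P$ as positive operators in $L(T)$; concretely, $R$ is the projection onto the $D$-fixed vectors, which sits inside the $D^2$-fixed subspace that is the range of $P$. Combined with $Q = Q^* = Q^2$, this gives $QRQ \leq QPQ$, so $\|QR\|^2 = \|QRQ\| \leq \|QPQ\| = \|QP\|^2$.

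Next I would bound $\|QP\|$ using \cite{HO}. The assignment $c \mapsto C$, $s \mapsto D^2$ sends the generators of $\Z_3 \star \Z_2$ to elements of $T$ of matching orders, and the Haagerup--Olesen theorem yields $\|QP\|_T \leq \|qp\|_{C^*_r(\Z_3 \star \Z_2)}$, with equality if and only if $F$ is amenable. The free-product norm is then a standard free-probability calculation: $q = (e+c+c^2)/3$ and $p = (e+s)/2$ are free projections in $L(\Z_3 \star \Z_2)$ with $\tau(q) = 1/3$ and $\tau(p) = 1/2$, and Voiculescu's formula gives
\[
 \|qp\| \;=\; \sqrt{\tau(q)(1-\tau(p))} + \sqrt{(1-\tau(q))\tau(p)} \;=\; \sqrt{1/6} + \sqrt{1/3} \;=\; \frac{1+\sqrt 2}{\sqrt 6}.
\]
Combining with the first step produces the upper bound $\|QR\| \leq (1+\sqrt 2)/\sqrt 6$.

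For the attainment clause, when $F$ is amenable \cite{HO} forces $\|QP\|_T = (1+\sqrt 2)/\sqrt 6$, and one must then show $\|QR\|_T$ equals $\|QP\|_T$ rather than being strictly smaller. This is the main obstacle: in general $\|QR\| < \|QP\|$ is possible, as witnessed by the free product $\Z_3 \star \Z_4$ where $\|qr\| = (\sqrt 2 + \sqrt 3)/\sqrt{12}$ is strictly smaller than $(1+\sqrt 2)/\sqrt 6$. I expect the attainment argument to rely on a more refined use of \cite{HO}: the HO construction produces specific approximate-eigenvector sequences for $QP$ in $L(T)$ witnessing $\|QP\|_T = (1+\sqrt 2)/\sqrt 6$ that can be shown to lie (in the limit) in the range of $R$, thereby also certifying $\|QR\|_T = (1+\sqrt 2)/\sqrt 6$.
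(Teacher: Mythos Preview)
Your reduction via $R \leq P$ to bounding $\|QP\|$ is exactly the paper's first step, and the free-probability formula for $\|qp\|$ in $C_r^*(\Z_2\star\Z_3)$ is also the same. However, your justification for $\|QP\|_T \leq (1+\sqrt 2)/\sqrt 6$ does not come from \cite{HO}, and merely having a homomorphism $\Z_3\star\Z_2 \to T$ does not by itself compare \emph{reduced} $C^*$-norms. The paper instead uses the group-theoretic fact that the subgroup $\langle C, D^2\rangle$ of $T$ is already isomorphic to $\Z_2 \star \Z_3 \cong PSL(2,\Z)$, so that $C_r^*(\Z_2\star\Z_3)$ embeds isometrically in $C_r^*(T)$ and one has $\|QP\|_T = (1+\sqrt 2)/\sqrt 6$ \emph{unconditionally}. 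In particular, your assertion that this equality holds ``if and only if $F$ is amenable'' is false, and with it your proposed mechanism for the attainment clause collapses.

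The attainment clause is where the real content lies, and your sketch does not supply it. The paper's argument is concrete and quite different from the approximate-eigenvector idea you outline. One uses the Haagerup--Olesen representation $\pi$ of $T$ on $\ell^2(\Z[\tfrac12]\cap[0,1))$, which lands in the Cuntz algebra $\mathcal{O}_2 = C^*(S_1,S_2)$; if $F$ is amenable then $\pi$ is weakly contained in $\lambda$, giving $\|\pi(QR)\| \leq \|QR\|_T$. The point is that $\|\pi(QR)\|$ can be computed \emph{exactly}: setting $x := (S_1+S_2)/\sqrt 2$, a proper isometry whose spectrum is the full closed unit disc, one finds
\[
\pi(RQR) \;=\; \tfrac16\, x^2(\sqrt 2\, I + x)^*(\sqrt 2\, I + x)\, x^{*2},
\]
and since $x$ is an isometry the outer factors do not change the norm, so $\|\pi(QR)\|^2 = \tfrac16\|\sqrt 2\, I + x\|^2 = \tfrac16(1+\sqrt 2)^2$. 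As you yourself note with the $\Z_3\star\Z_4$ example, the strict inequality $\|QR\| < \|QP\|$ can genuinely occur, so the equality here is a specific feature of $T$ made visible through $\pi$, not a soft consequence of $R\leq P$ or of anything happening at the level of $\|QP\|$.
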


\begin{proof}
It is well-known that
$$T\supset\langle S,C\rangle = \Z_2\star \Z_3\cong PSL(2,\Z)=SL(2,\Z)/\{\pm I\},$$
(cf.~\cite[p.11,~Example 1.5.3]{JPS}).
By \cite[Proposition~2.5.9]{BrownOzawa} the reduced $C^*$-algebra of any subgroup $H$ of a (discrete) group $G$ is a subalgebra of  $C_r^*(G)$ in a canonical way, namely, one takes the left regular representation of $G$ and restricts it to $H$.
Hence
$$C_r^*(\Z_2\star \Z_3)=C_r^*(\langle S,C\rangle)\subset C_r^*(T).$$
By this and Example \ref{ex:freeCase} (or \cite[Remark 15]{ABHaagerup}) we get
$$||PQ||=\frac{1+\sqrt2}{\sqrt6}.$$   
By spectral reasons
\begin{equation}\label{e:R_le_P}
R\le P.
\end{equation}
Hence $0\le QRQ\le QPQ$, and by the $C^*$-identity (i.e. $||x^*x||=||x||^2$)  we get
$$||QR||^2=||QRQ||\le ||QPQ||.$$
Since the norm of an element is the same as the norm of its adjoint (i.e. $||x^*||=||x||$),
$$||QPQ||^{1/2}=||PQP||^{1/2}=||PQ||=\frac{1+\sqrt 2}{\sqrt{6}}.$$
Thus
\begin{equation}
  ||QR||\le \frac{1+\sqrt 2}{\sqrt{6}}=0.97\ldots.
\end{equation}
We will now prove that  $||QR|| \geq \frac{1+\sqrt{2}}{\sqrt{6}}$ if $F$ is amenable.
Let $H:=\ell^2(\Z[\frac12]\cap [0,1))$, and let $\pi:T\to B(H)$ be the representation given by
$$\pi(x)\delta_t=\delta_{x(t)},\qquad x\in T,\,\, t\in \Z[1/2]\cap[0,1),$$
where the set $\{\delta_t\mid t\in   \Z[1/2]\cap[0,1)\}$ is an orthonormal basis for the Hilbert space $H$.
Let $S_1,S_2\in B(H)$ be the isometries defined by
$$S_1\delta_t:=\delta_{t/2},\qquad S_2\delta_t:=\delta_{(t+1)/2},\qquad t\in \Z[1/2]\cap [0,1),$$
and note that they satisfy the relation $S_1 S_1^*+S_2 S_2^*=I$.
Then by \cite[Proposition 4.3]{HO}$, \pi(\C T)$ is contained in the Cuntz algebra $\fO_2=C^*(S_1,S_2)$. 
If $F$ is amenable then by \cite[Proposition 4.4]{HO}, $\pi$ is weakly contained in the left regular representation of $T$.
This is equivalent to the existence of a *-homomorphism $\rho$ from the reduced $C^*$-algebra $C_r^*(T)$ into the $C^*$-algebra generated by $\pi(T)$ such that $\pi=\rho\circ \lambda$.
Hence
$$||\pi(x)||_{B(H)}=||\rho(\lambda(x))||_{B(H)}\le ||\lambda(x)||_{C_r^*(T)},\quad x\in \C T.$$
Thus amenability of $F$ implies that the norm
$||QR||$ is greater or equal than the norm $||\pi(QR)||$. Thus, it suffices to show that 
$||\pi(QR)|| = \frac{1+\sqrt{2}}{\sqrt{6}}$. We have $||\pi(QR)||^2 = ||\pi(RQR)||$,
and 
$$ \pi(RQR) = \frac1{48} (S_1 + S_2)^2 (2I + S_1 + S_2)^* (2I + S_1 + S_2) (S_1 + S_2)^{* 2}. $$ 
Substituting $x := \frac{1}{\sqrt{2}}(S_1 + S_2)$ gives
$$ \pi(RQR) = \frac16 x^2 (\sqrt{2}I + x)^* (\sqrt{2}I + x) x^{*2}. $$ 
Since $x$ is a proper isometry (i.e. $x^*x=I\ne xx^*$),
$$ ||x^2 (\sqrt{2}I + x)^* (\sqrt{2}I + x) x^{*2}|| = ||(\sqrt{2}I + x)^* (\sqrt{2}I + x)||. $$
Consequently, 
$$ ||\pi(RQR)|| = \frac16 ||(\sqrt{2}I + x)^* (\sqrt{2}I + x)|| 
            = \frac16 ||\sqrt{2}I + x||^2. $$
            
Now, the spectrum of $x$ is the entire unit disc because $x$ is a proper isometry. (One uses the Wold decomposition to show this).
Thus the spectral radius of $\sqrt{2}I + x$ is $\sqrt{2} +1$. Hence the norm is at least this number and by the triangle inequality we get
$$ ||\sqrt{2}I + x|| = 1 + \sqrt{2}, $$ 
as required.
\end{proof}

\section{\textbf{Two projections}}
Let $\Gamma:=\langle c,d\rangle$ be a discrete group on two generators such that $c$ is of order $k$ and $d$ of order $\ell$, for some integers $\ell\ge k\ge 2$.
Let $L(\Gamma)$ denote its von Neumann algebra, i.e. $L(\Gamma)$ is the von Neumann algebra in $B(\ell^2(\Gamma))$ generated by $\Gamma$. Then
$$\tau(h) = \langle h\delta_e,\delta_e\rangle,\qquad h \in L(\Gamma)$$
defines a normal faithful tracial state on $L(\Gamma)$ (see e.g. [23, Section 6.7]). 
Let 
$$P:=\frac{e+c+\cdots+c^{k-1}}{k}\in\C\Gamma\subset C_r^*(\Gamma)\subset L(\Gamma)\subset B(\ell^2(\Gamma))$$
$$Q:=\frac{1+d+\cdots+d^{\ell-1}}{\ell}\in\C\Gamma\subset C_r^*(\Gamma)\subset L(\Gamma)\subset B(\ell^2(\Gamma))$$
 be fixed projections such that 
$$||PQ||<1.$$ 
Let 
$$\alpha:=\tau(P)=\frac1k,\qquad \beta:=\tau(Q)=\frac1{\ell}$$
and notice that $$ 0< \beta \le \alpha\le \frac12.$$
It follows by P. R. Halmos' work \cite{Halmos} that the $C^*$-algebra $A:=C^*(P,Q,I)$ generated by 2 projections $P,Q$ and the identity $I$ is the direct sum of an abelian $C^*$-algebra and a $C^*$-algebra of type $I_2$ i.e. of the form $C(X)\otimes M_2(\C)$ where $X$ equals the spectrum of a certain positive contraction.
The abelian part is of dimension at most 4 and its minimal projections are given as
$$ e_{00}:=I-P\lor Q=(I-P)\land(I-Q)$$
$$e_{11}:=P\land Q\quad\qquad e_{10}:=P\land (I-Q)\quad\qquad e_{01}:=(I-P)\land Q,$$
where some, and even all may vanish.
The central support for the $I_2$ part of $A$ is denoted $f$ and it is given by
\begin{equation}\label{e:f}
f=I-e_{11}-e_{10}-e_{01}-e_{00}.
\end{equation}
We define
\begin{equation}\label{e:R-S}
R:=P-e_{11}-e_{10},\qquad S:=Q-e_{11}-e_{01}
\end{equation}
and see that $R$ and $S$ are projections satisfying $R\le f$, $S\le f$ because $Pe_{1i}=e_{1i}P=e_{1i}$, $Pe_{0i}=e_{0i}P=0$ and $Qe_{i1}=e_{i1}Q=e_{i1}$, $Qe_{i0}=e_{i0}Q=0$, $i=0,1$. Moreover,
$$R\land S=0\qquad R\lor S=f.$$
So inside $fAf$, the pair of projections $R,S$ actually fit the description given by Halmos.
Thus, there exists an isomorphism $\Phi$ of $f Af$ onto
$C(\sigma(RSR))\otimes M_2(\C)=M_2(C(\sigma(RSR)))$, such that
$$\Phi(f)=I_{C(\sigma(RSR))}\otimes \left(\begin{array}{cc}
  1  & 0   \\
  0 &  1   
\end{array}
\right)$$
$$\Phi(R)=I_{C(\sigma(RSR))}\otimes \left(\begin{array}{cc}
  1  & 0   \\
  0 &  0   
\end{array}
\right)$$
and
\begin{eqnarray*}
\Phi(S)&=&\left(\begin{array}{cc}
  RSR  & (RSR)^{1/2}(f-RSR)^{1/2}   \\
  (RSR)^{1/2}(f-RSR)^{1/2} &  f-RSR   
\end{array}
\right).
\end{eqnarray*}
Now, 
$$P-\alpha I= (R-\alpha f)+(1-\alpha)e_{11}+(1-\alpha)e_{10}-\alpha e_{01}-\alpha e_{00}$$
$$Q-\beta I= (S-\beta f)+(1-\beta)e_{11}+(1-\beta)e_{01}-\beta e_{10}-\beta e_{00}$$
$$(P-\alpha I)(Q-\beta I)=(R-\alpha f)(S-\beta f) + (1-\alpha)(1-\beta) e_{11}-\alpha(1-\beta)e_{01}-\beta(1-\alpha)e_{10}+$$
$$(-\alpha)(-\beta) e_{00}.$$
Thus since $R e_{ij}=S e_{ij}=f e_{ij}=0$ for $i,j\in\{0,1\}$ we get
\begin{eqnarray}\label{e:tauEFn}
\qquad\qquad\tau\Big(\big((P-\alpha I)(Q-\beta I)\big)^n\Big)&=&\tau\Big(\big((R-\alpha f)(S-\beta f)\big)^n\Big)+(\alpha\beta)^n\tau(e_{00})+\\
\nonumber&&(-\alpha (1-\beta))^n \tau(e_{01})+(-\beta(1-\alpha ))^n \tau(e_{10})+\\
\nonumber&&((1-\alpha )(1-\beta))^n \tau(e_{11}).
\end{eqnarray}
Similarly, since
$$PQ=RS+e_{11}$$
$$(I-P)(I-Q)=(f-R)(f-S)+e_{00}$$
$$P(I-Q)=R(f-S)+e_{10}$$
$$(I-P)Q=(f-R)S+e_{01}$$
we get
$$\tau((PQ)^n)=\tau((RS)^n)+\tau(e_{11})$$
$$\tau\Big(\big((I-P)(I-Q)\big)^n\Big)=\tau\Big(\big((f-R)(f-S)\big)^n\Big)+\tau(e_{00})$$
$$\tau\Big(\big(P(I-Q)\big)^n\Big)=\tau\Big(\big(R(f-S)\big)^n\Big)+\tau(e_{10})$$
$$\tau\Big(\big((I-P)Q\big)^n\Big)=\tau\Big(\big((f-R)S\big)^n\Big)+\tau(e_{01}).$$
Since $||PQ||<1$, $||e_{11}||=||e_{11}PQ||<1$ because the norm is submultiplicative, and thus the minimal projection $$e_{11}=0,$$ and since $PQP=RSR+e_{11}$, the spectrum $$\sigma(RSR)=\sigma(PQP)\subset [0,1).$$
By the continuous functional calculus for the positive element $RSR\in fAf$ we can write
$$\Phi(f)= \left(\begin{array}{cc}
  1  & 0   \\
  0 &  1   
\end{array}
\right),
\qquad \Phi(R)= \left(\begin{array}{cc}
  1  & 0   \\
  0 &  0   
\end{array}
\right).$$
$$
\Phi(S)=
\left(\begin{array}{cc}
  t  & \sqrt{t(1-t)}   \\
  \sqrt{t(1-t)} &  1-t   
\end{array}
\right),\qquad\qquad\quad  0<t<1.
$$
\begin{eqnarray*}
\Phi(RSR)&=&
\left(\begin{array}{cc}
  t  & 0   \\
  0 &  0   
\end{array}
\right),
\qquad\quad 0<t<1.
\\
\Phi(R(f-S)R)&=&
\left(\begin{array}{cc}
  1-t  & 0   \\
  0 &  0   
\end{array}
\right),\qquad 0<t<1.
\\
\Phi((f-R)S(f-R))&=&
\left(\begin{array}{cc}
  0  & 0   \\
  0 &  1-t   
\end{array}
\right),
\qquad 0<t<1.
\\
\Phi((f-R)(f-S)(f-R))&=&
\left(\begin{array}{cc}
  0  & 0   \\
  0 &  t   
\end{array}
\right),\qquad\qquad 0<t<1.
\end{eqnarray*}

The restriction of $\tau$ to $f A f$ is a trace on this algebra, so by the spectral theorem to the positive element $RSR$, there must exist a positive measure $\mu$  on $\sigma(RSR)$ such that for any $x\in f Af$ we have
\begin{eqnarray*}
\tau(x)&=&\int_{t\in\sigma(RSR)} \frac12\Phi(x)_{11}(t) + \frac12\Phi(x)_{22}(t)d\mu(t)\\
&=&\int_0^1\tau_t(x)d\mu(t)
\end{eqnarray*}
where 
$$\tau_t(x):=\frac12\Tr(\Phi(x)(t)),$$
 and $\frac12\Tr$ is the normalized trace of $M_2(\C)$.
Thus  
 $$1=\tau(I)= \int_0^1\tau_t(f)d\mu(t)+\tau(e_{11})+\tau(e_{10})+\tau(e_{01})+\tau(e_{00}),$$ 
and since $\tau((pq)^n)=\tau((pqp)^n)$ for any two projections $p,q$ and any $n\in\N$, we get
\begin{eqnarray}\label{e:tauPQvna}
\tau\big((PQ)^n\big)&=&\int_0^1\tau_t\big((RS)^n\big)d\mu(t)\,\,+\,\, \tau(e_{11})\\
\nonumber\tau\Big(\big((I-P)(I-Q)\big)^n\Big)&=&\int_0^1\tau_t\Big(\big((f-R)(f-S)\big)^n\Big)d\mu(t)\,\,+\,\, \tau(e_{00})\\
\nonumber\tau\Big(\big(P(I-Q)\big)^n\Big)&=&\int_0^1\tau_t\Big(\big(R(f-S)\big)^n\Big)d\mu(t)\,\,+\,\, \tau(e_{10})\\
\nonumber\tau\Big(\big((I-P)Q\big)^n\Big)&=&\int_0^1\tau_t\Big(\big((f-R)S\big)^n\Big)d\mu(t)\,\,+\,\, \tau(e_{01}).
\nonumber\end{eqnarray}
It is not difficult to show that $\mu(\{0\})=0$.
We extend $\mu$ to a probability measure on $[0,1]$ by putting
\begin{eqnarray*}
&&\mu(\{0\}):=\mu_{10}+\mu_{01}\\
&&\mu(\{1\}):=\mu_{00}+\mu_{11},
\end{eqnarray*}
where
$$\mu_{ij}:=\tau(e_{ij}),\quad i,j\in\{0,1\}.$$
\begin{lem}\label{l:nu_mn}
Define $$\nu:=\mu-(\mu_{10}-\mu_{01})\delta_0-(\mu_{00}-\mu_{11})\delta_1.$$
Then $\nu$ is a measure on $[0,1]$ with total mass $\nu([0,1])=2\beta$, and
$$\tau((PQ)^n)=\frac12\int_{[0,1]}t^n d\nu(t), \qquad n\ge 1.$$
Moreover,
$$\mu_{10}-\mu_{01}=\alpha-\beta\ge0, \qquad\mu_{11}=0,\qquad \mu_{00}=1-\alpha-\beta\ge0, \qquad\nu(\{1\})=0.$$

\end{lem}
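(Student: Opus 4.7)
The plan is to extract everything from the explicit matricial form of $\Phi$ on $fAf$ together with the four identities above the statement, and then bookkeep the point masses at $0$ and $1$ separately.

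\textbf{Step 1: determine $\tau(R)$ and $\tau(S)$.}
Using $\Phi(R)=\mathrm{diag}(1,0)$ and the diagonal entries $t,1-t$ of $\Phi(S)$, we get $\tau_t(R)=\tau_t(S)=\tfrac12$ for every $t$, so applying the spectral disintegration $\tau(x)=\int\tau_t(x)\,d\mu(t)$ on $fAf$ yields $\tau(R)=\tau(S)=\tfrac12\tau(f)$. The mass of $\mu$ on $\sigma(RSR)$ is $\tau(f)$ (take $x=f$), and then extending $\mu$ to $[0,1]$ by the definition above the statement makes it a probability measure. Combined with $e_{11}=0$ (proved just above the statement) this will handle $\mu_{11}=0$ immediately.

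\textbf{Step 2: compute $\mu_{10}$, $\mu_{01}$, $\mu_{00}$.}
From $P=R+e_{11}+e_{10}$, $Q=S+e_{11}+e_{01}$, $I-P=(f-R)+e_{00}+e_{01}$, $I-Q=(f-R)+e_{00}+e_{10}$ — wait, actually $I-Q=(f-S)+e_{00}+e_{10}$ — apply $\tau$ to each to get the four equations
\[
\alpha=\tfrac12\tau(f)+\mu_{10},\qquad \beta=\tfrac12\tau(f)+\mu_{01},
\]
\[
1-\alpha=\tfrac12\tau(f)+\mu_{00}+\mu_{01},\qquad 1-\beta=\tfrac12\tau(f)+\mu_{00}+\mu_{10}.
\]
Subtracting the first two gives $\mu_{10}-\mu_{01}=\alpha-\beta$. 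Adding the first and third, and using $\mu_{11}=0$, yields $\mu_{00}=1-\alpha-\beta$; nonnegativity follows from $\alpha+\beta\le\tfrac12+\tfrac12=1$ (actually $\alpha,\beta\le\tfrac12$).

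\textbf{Step 3: compute $\tau_t((RS)^n)$.}
Multiplying the $2\times 2$ matrices, $\Phi(RS)=\bigl(\begin{smallmatrix}t&\sqrt{t(1-t)}\\0&0\end{smallmatrix}\bigr)$, so inductively $\Phi((RS)^n)=\bigl(\begin{smallmatrix}t^n&t^{n-1}\sqrt{t(1-t)}\\0&0\end{smallmatrix}\bigr)$, giving $\tau_t((RS)^n)=\tfrac12 t^n$ for every $n\ge1$.

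\textbf{Step 4: assemble the moment formula.}
Combine the first of the four spectral identities (the $\tau((PQ)^n)$ line above the statement), $e_{11}=0$, Step 3, and the fact that the non-extended $\mu$ has no mass at $0$ (stated in the excerpt) to obtain
\[
\tau((PQ)^n)=\tfrac12\int_{[0,1)} t^n\,d\mu(t)=\tfrac12\int_{[0,1]} t^n\,d\mu(t)-\tfrac12(\mu_{00}+\mu_{11})
\]
for $n\ge1$ (the second equality adjusts for the point mass added at $1$). On the other hand, since $0^n=0$ and $1^n=1$ for $n\ge1$, the definition of $\nu$ gives $\tfrac12\int t^n\,d\nu=\tfrac12\int t^n\,d\mu-\tfrac12(\mu_{00}-\mu_{11})$, and the two expressions agree because $\mu_{11}=0$.

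\textbf{Step 5: $\nu$ is a positive measure with the asserted total mass, and $\nu(\{1\})=0$.}
The only places where $\nu$ could fail to be positive are $\{0\}$ and $\{1\}$. At $0$: $\mu(\{0\})=\mu_{10}+\mu_{01}\ge \mu_{10}-\mu_{01}$ since $\mu_{01}\ge0$. At $1$: $\mu(\{1\})=\mu_{00}+\mu_{11}=\mu_{00}-\mu_{11}$, so $\nu(\{1\})=0$ (this uses $\mu_{11}=0$ in both terms). Finally the total mass is
\[
\nu([0,1])=1-(\mu_{10}-\mu_{01})-(\mu_{00}-\mu_{11})=1-(\alpha-\beta)-(1-\alpha-\beta)=2\beta,
\]
by the values computed in Step 2.

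The only subtle point is the bookkeeping in Step 4 between the spectral measure on $\sigma(RSR)\subset[0,1)$ and its extension to $[0,1]$; everything else is linear algebra in the $2\times2$ matrix picture.
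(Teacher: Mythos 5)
Your proof is correct and follows essentially the same route as the paper: both rest on the Halmos $2\times 2$ matrix picture for $R,S$ on $fAf$, the trace identities $\tau_t(R)=\tau_t(S)=\tfrac12$ and $\tau_t((RS)^n)=\tfrac12 t^n$, the vanishing of $e_{11}$ from $\|PQ\|<1$, and the same linear bookkeeping of $\mu_{00},\mu_{01},\mu_{10},\mu_{11}$ (the paper obtains your four equations by setting $n=1$ in Eq.~(\ref{e:tauPQvna}) rather than tracing $P=R+e_{11}+e_{10}$ etc., which is the same computation). The only blemish is in Step 2, where $\mu_{00}=1-\alpha-\beta$ comes most directly from subtracting the equation for $\beta$ from that for $1-\alpha$ rather than from ``adding the first and third''; the system you wrote down yields it either way.
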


\begin{proof}
Letting $n=1$ in Eq.~(\ref{e:tauPQvna}), we get
\begin{eqnarray*}
1=\tau(I)&=&\mu_{00}+\mu_{01}+\mu_{10}+\mu_{11}+\mu((0,1)).\\
\alpha=\tau(P)&=&\tau(P(I-Q))+\tau(PQ)=\mu_{10}+\mu_{11}+\frac12\mu((0,1)).\\
\beta=\tau(Q)&=&\tau((I-P)Q)+\tau(PQ)=\mu_{01}+\mu_{11}+\frac12\mu((0,1)).\\
1-\alpha=\tau(I-P)&=&\mu_{00}+\mu_{01}+\frac12\mu((0,1)).\\
1-\beta=\tau(I-Q)&=&\mu_{00}+\mu_{10}+\frac12\mu((0,1)).\\
\end{eqnarray*}
Combining these equations we get
$$0\le \alpha-\beta=\tau(P)-\tau(Q)=\mu_{10}-\mu_{01}$$
$$0\le1-\alpha-\beta=\tau(I-P)-\tau(Q)=\mu_{00}-\mu_{11}.$$
Since $||PQ||<1$, $\mu_{11}=0$. Thus
$$ \mu_{00}=1-\alpha-\beta.$$
We can write $\mu_{ij}$, $i=0,1$ as follows
\begin{eqnarray*}
&&\mu_{01}=\frac12\mu(\{0\})-\frac{\mu_{10}-\mu_{01}}{2}\\
&&\mu_{10}=\frac12\mu(\{0\})+\frac{\mu_{10}-\mu_{01}}{2}\\
&&\mu_{00}=\frac12\mu(\{1\})+\frac{\mu_{00}-\mu_{11}}{2}\\
&&\mu_{11}=\frac12\mu(\{1\})-\frac{\mu_{00}-\mu_{11}}{2},
\end{eqnarray*}
and the measure $\nu$ on $[0,1]$ defined in the lemma 
satisfies
\begin{eqnarray}\label{e:mu10relation}
\nonumber&&\nu(\{0\})=\mu(\{0\})-(\mu_{10}-\mu_{01})\\
\nonumber&&\nu(\{1\})=\mu(\{1\})-(\mu_{00}-\mu_{11})\\
\nonumber&&\mu_{01}=\frac{\nu(\{0\})+(\mu_{10}-\mu_{01})}{2}-\frac{\mu_{10}-\mu_{01}}2=\frac12\nu(\{0\})\\
&&\mu_{10}=\frac12\nu(\{0\})+(\mu_{10}-\mu_{01})\\
\nonumber&&\mu_{00}=\frac{\nu(\{1\})+(\mu_{00}-\mu_{11})}{2}+\frac{\mu_{00}-\mu_{11}}{2}=\frac12\nu(\{1\})+(\mu_{00}-\mu_{11})\\
\nonumber&&\mu_{11}=\frac12\nu(\{1\}).
\end{eqnarray}
Moreover, the total mass of $\nu$ is
\begin{eqnarray*}
\nu([0,1])&=&\nu(\{0\})+\nu(\{1\})+\mu((0,1))\\
&=&1-(\mu_{10}-\mu_{01}+\mu_{00}-\mu_{11})= 1-(\alpha-\beta+1-\alpha-\beta)\\
&=&2\beta,
\end{eqnarray*}
and
$$\int_{[0,1]}{\frac12t^n d\nu(t)}=\frac12\int_{(0,1)}t^n d\mu(t)+\frac12\nu(\{1\})=\tau((PQ)^n), \qquad n\in\N.$$
\end{proof}

By the following proposition and Lemma \ref{l:nu_mn} we can state the ``cyclic numbers"
$$\tau(((P-\alpha I)(Q-\beta I))^n),\qquad n\in\N,$$
in terms of the moments $\tau((PQ)^n)$.
\begin{thm}\label{t:mn_zn}
For $n\in\N$, 
\begin{multline*}
$$\tau(((P-\alpha I)(Q-\beta I))^n)=(\alpha \beta )^n\mu_{00}+(-\beta (1-\alpha ))^n(\mu_{10}-\mu_{01})\,\,+\\
(\alpha (1-\alpha )\beta (1-\beta ))^{n/2}\int_{[0,1]}T_n(\frac{t-\alpha -\beta +2\alpha \beta }{2\sqrt{\alpha (1-\alpha )\beta (1-\beta )}})d\nu(t),
\end{multline*}
where $T_n$ is the Chebyshev polynomial of the first kind.
\end{thm}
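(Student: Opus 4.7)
The plan is to start from Eq.~(\ref{e:tauEFn}) and evaluate the leftover term $\tau(((R-\alpha f)(S-\beta f))^n)$ via the isomorphism $\Phi$, then re-index the resulting spectral integral from $\mu$ to $\nu$ by accounting for the point mass at $0$. Lemma \ref{l:nu_mn} already gives $\mu_{11}=0$, so Eq.~(\ref{e:tauEFn}) simplifies to
\begin{multline*}
\tau(((P-\alpha I)(Q-\beta I))^n) = \tau(((R-\alpha f)(S-\beta f))^n) + (\alpha\beta)^n\mu_{00} \\ + (-\alpha(1-\beta))^n \mu_{01} + (-\beta(1-\alpha))^n \mu_{10},
\end{multline*}
and the goal is to show that the last three corner terms combine with a boundary contribution at $t=0$ to produce the claimed formula.

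Next I would compute the $2\times 2$ matrix $M(t):=\Phi((R-\alpha f)(S-\beta f))$ directly from the explicit formulas for $\Phi(R), \Phi(S), \Phi(f)$ recalled in the text. A short calculation yields
$$s(t):=\Tr(M(t)) = t-\alpha-\beta+2\alpha\beta, \qquad d:=\det(M(t)) = \alpha(1-\alpha)\beta(1-\beta),$$
the $t$-dependence in $\det$ cancelling via $t(1-t)-(t-\beta)(1-t-\beta)=\beta(1-\beta)$. For any $2\times 2$ matrix with trace $s$ and determinant $d>0$, writing the eigenvalues as $\sqrt d(\cos\theta\pm i\sin\theta)$ with $\cos\theta=s/(2\sqrt d)$ and invoking $T_n(\cos\theta)=\cos(n\theta)$ yields the Binet-type identity $\tfrac12\Tr(M^n)=d^{n/2}T_n(s/(2\sqrt d))$. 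Applying this to $M(t)$ and integrating against $\mu$ on $\sigma(RSR)\subset(0,1)$ gives
$$\tau(((R-\alpha f)(S-\beta f))^n) = d^{n/2}\int_{(0,1)} T_n\!\left(\tfrac{t-\alpha-\beta+2\alpha\beta}{2\sqrt d}\right)d\mu(t).$$

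Finally I would promote this integral to $[0,1]$ against $\nu$. Since $\nu|_{(0,1)}=\mu|_{(0,1)}$, $\nu(\{1\})=0$, and $\nu(\{0\})=2\mu_{01}$ (by the bookkeeping of the $\mu_{ij}$ in Lemma \ref{l:nu_mn}), only the $t=0$ endpoint contributes a correction. Evaluating $M(0)$ as the diagonal matrix with entries $-(1-\alpha)\beta$ and $-\alpha(1-\beta)$ gives $2d^{n/2}T_n(s(0)/(2\sqrt d))=(-\beta(1-\alpha))^n+(-\alpha(1-\beta))^n$, so
$$d^{n/2}\!\int_{[0,1]}\! T_n\!\left(\tfrac{t-\alpha-\beta+2\alpha\beta}{2\sqrt d}\right)d\nu(t) = \tau(((R-\alpha f)(S-\beta f))^n) + \mu_{01}\bigl[(-\beta(1-\alpha))^n+(-\alpha(1-\beta))^n\bigr].$$
Substituting this back into the expansion above, the $(-\alpha(1-\beta))^n\mu_{01}$ contribution cancels and the remaining linear combination collapses to $(-\beta(1-\alpha))^n(\mu_{10}-\mu_{01})$, exactly as claimed. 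The main hazard is this boundary bookkeeping at $t=0$: one must correctly identify $\nu(\{0\})=2\mu_{01}$ (rather than $\mu(\{0\})=\mu_{10}+\mu_{01}$) and verify the precise cancellation of the spurious $(-\alpha(1-\beta))^n$ term, after which the Chebyshev rewriting does all the remaining work.
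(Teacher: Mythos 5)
Your proposal is correct and follows essentially the same route as the paper: both start from Eq.~(\ref{e:tauEFn}), compute the trace $t-\alpha-\beta+2\alpha\beta$ and determinant $\alpha(1-\alpha)\beta(1-\beta)$ of $\Phi((R-\alpha f)(S-\beta f))$, convert $\tfrac12\Tr(M(t)^n)$ into the Chebyshev expression, and reconcile the atoms of $\mu$ at $0$ and $1$ with those of $\nu$ via Lemma~\ref{l:nu_mn} -- you merely perform the boundary bookkeeping after the integration rather than before, as the paper does via Eq.~(\ref{e:mu10relation}). The only point to tighten is that the identity $\tfrac12\Tr(M^n)=d^{n/2}T_n(s/(2\sqrt d))$ is justified by the substitution $\cos\theta=s/(2\sqrt d)$ only where $|s|\le 2\sqrt d$; as in the paper, one extends it to all $t$ by noting that both sides are polynomials in $t$ agreeing on an interval.
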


\begin{proof}
Let
$$E:=P-\alpha I\qquad F:=Q-\beta I,\qquad E':=R-\alpha f,\qquad F':=S-\beta f$$
Then the spectrum $\sigma(E)=\{-\alpha,1-\alpha\}$, $\sigma( F)=\{-\beta,1-\beta\}$, and by Eq.(\ref{e:tauEFn}) we get
$$\tau((E F)^n)=((-\alpha )(-\beta))^n\mu_{00}+(-\alpha (1-\beta))^n\mu_{01}+(-\beta(1-\alpha ))^n\mu_{10}+((1-\alpha )(1-\beta))^n\mu_{11}+$$
$$\int_{(0,1)} \tau_t((E'F')^n)d\mu(t)$$
$$=(\alpha \beta)^n\mu_{00}+(-\beta(1-\alpha ))^n(\mu_{10}-\mu_{01})+\nu(\{0\})(\frac12(-\beta(1-\alpha ))^n+\frac12(-\alpha (1-\beta))^n)+$$
$$\frac12\nu(\{1\})((1-\alpha )(1-\beta))^n+\int_{(0,1)}\tau_t((E'F')^n)d\mu(t)$$
$$=(\alpha \beta)^n\mu_{00}+(-\beta(1-\alpha ))^n(\mu_{10}-\mu_{01})+\int_{[0,1]}\tau_t((E'F')^n)d\nu(t),$$
where we used Eq.~(\ref{e:mu10relation}) in the second equality.

To complete the proof we just need to show that 
$$\int_{[0,1]}\tau_t((E'F')^n)d\nu(t)=(\alpha (1-\alpha )\beta (1-\beta ))^{n/2}\int_{[0,1]}T_n(\frac{t-\alpha -\beta +2\alpha \beta }{2\sqrt{\alpha (1-\alpha )\beta (1-\beta )}})d\nu(t).$$
Since
\begin{eqnarray*}
\Phi(E')&=&\Phi(R-\alpha f)=\left(\begin{array}{cc}
  1-\alpha   & 0   \\
  0 &  -\alpha    
\end{array}
\right)\\
\Phi(F')&=&\Phi(S-\beta f)=\left(\begin{array}{cc}
  t-\beta   & \sqrt{t(1-t)}   \\
  \sqrt{t(1-t)} &  1-t-\beta    
\end{array}
\right)
\end{eqnarray*}
we get
$$\Phi(E'F')=\left(\begin{array}{cc}
  (1-\alpha )(t-\beta )  & (1-\alpha )\sqrt{t(1-t)}   \\
  -\alpha \sqrt{t(1-t)} &  -\alpha (1-t-\beta )   
\end{array}
\right).
$$
Since
\begin{eqnarray*}
\mathrm{Tr}(\Phi(E'F'))&=&(1-\alpha )(t-\beta )-\alpha (1-t-\beta )=t-\alpha -\beta +2\alpha \beta\\
\det(\Phi(E'F'))&=&\alpha (1-\alpha )(\beta (1-\beta ))
\end{eqnarray*}
the characteristic polynomial $\lambda^2+Tr(\Phi(E'F'))+\det(\Phi(E'F'))$ of $\Phi(E'F')$ is 
$$\lambda^2-(t-\alpha -\beta +2\alpha \beta )\lambda+\alpha (1-\alpha )(1-\beta ).$$
The eigenvalues of $\Phi(E'F')$ are
$$\lambda_1(t):=\frac{t-\alpha -\beta +2\alpha \beta +\sqrt{(t-\alpha -\beta +2\alpha \beta )^2-4\alpha (1-\alpha )\beta (1-\beta )}}{2}$$
$$\lambda_2(t):=\frac{t-\alpha -\beta +2\alpha \beta -\sqrt{(t-\alpha -\beta +2\alpha \beta )^2-4\alpha (1-\alpha )\beta (1-\beta )}}{2}.$$

If $|t-\alpha -\beta +2\alpha \beta |\le 2\sqrt{\alpha (1-\alpha )\beta (1-\beta )}$ then $\lambda_2=\overline{\lambda_1}$ and
$$\lambda_1=|\lambda_1|e^{i\theta}\qquad \lambda_2=|\lambda_2|e^{-i\theta} \qquad |\lambda_1|=|\lambda_2|=\sqrt{\alpha (1-\alpha )\beta (1-\beta )},$$
where
$$\cos\theta=\frac{\mathrm{Re\, }\lambda_1}{2|\lambda_1|}=\frac{t-\alpha -\beta +2\alpha \beta }{2\sqrt{\alpha (1-\alpha )\beta (1-\beta )}}.$$

Let $t_1,t_2$ be the roots of $(t-\alpha -\beta +2\alpha \beta )^2-4\alpha (1-\alpha )\beta (1-\beta )$
with $t_1<t_2$.
Then for $t_1< t< t_2$ it holds
\begin{eqnarray*}
\tau_t((E'F')^n)&=&\frac{\lambda_1(t)^n+\lambda_2(t)^n}{2}\\
&=&|\lambda_1|^n\frac{e^{in\theta }+e^{-in\theta }}{2}\\
&=&|\alpha (1-\alpha )\beta (1-\beta )|^{n/2}\cos(n\theta)\\
&=&(\alpha (1-\alpha )\beta (1-\beta ))^{n/2}T_n(\frac{t-\alpha -\beta +2\alpha \beta }{2\sqrt{\alpha (1-\alpha )\beta (1-\beta )}}),
\end{eqnarray*}
where $T_n$ is the Chebyshev polynomials of the first kind.
Since both sides of this equality are polynomials which agree on the interval $[t_1,t_2]$, they must be the same polynomials.
Hence
\begin{eqnarray*}
\int_{[0,1]}\tau_t((E'F')^n)d\nu(t)&=&\int_{[0,1]} \frac{\lambda_1(t)^n+\lambda_2(t)^n}{2}d\nu(t).\\
&=&\int_{[0,1]}(\alpha (1-\alpha )\beta (1-\beta ))^{n/2}T_n(\frac{t-\alpha -\beta +2\alpha \beta }{2\sqrt{\alpha (1-\alpha )\beta (1-\beta )}})\,d\nu(t).
\end{eqnarray*}
\end{proof}

 
 \section{\textbf{The free case}}\label{s:free}
Let $p\in L(\Gamma)$ be a nonzero projection different from the identity. Here projections mean self-adjoint idempotents ($p^2=p=p^*$). 
Let $\mu$  be the spectral distribution measure of $p$  with respect to the trace $\tau$ on $L(\Gamma)$. 
Then $\supp(\mu)=\sigma(p)=\{0,1\}$ because $\tau$ is faithful, and 
$$\tau(f(p))=\int_{\sigma(p)}f d{\mu}=\int_{\sigma(p)}(f(0)1_{\{0\}}+f(1)1_{\{1\}})d{\mu}=f(0)\mu(\{0\})+f(1)\mu(\{1\}),$$
for any $f\in C(\{0,1\})$.
In particular, by letting $f(t)=t$ and $f(t)=1-t$ we get
$$\mu(\{0\})=1-\tau(p),\quad \mu(\{1\})=\tau(p).$$
Moreover, the moments $m_n(p)=\tau(p^n)=\tau(p)$ for $n\in\N$.

\begin{thm}\label{t:freemeasure}
Let $\Gamma:=\Z_k\star \Z_\ell$ be the free product on two generators $c\in \Z_k$, $d\in\Z_\ell$ for some integers $k,\ell\ge 2$. Let
$$p:=\frac{e+c+\cdots+c^{k-1}}{k},\,\,q:=\frac{e+d+\cdots+d^{\ell-1}}{\ell}\in\C\Gamma$$ 
be two nonzero projections different from the identity.
Moreover, assume that  $\alpha:=\tau(p)$, $\beta :=\tau(q)$ satisfy $0< \beta \le \alpha <1$, $\alpha +\beta < 1$ and let
\begin{eqnarray*}
&&\lambda_1:=\sqrt{\alpha (1-\beta )}+\sqrt{\beta (1-\alpha )}\\
&&\lambda_2:=\sqrt{\alpha (1-\beta )}-\sqrt{\beta (1-a)}.
\end{eqnarray*}
Then the spectral distribution measure $\mu_{\mathrm{free}}$ of $\left(
\begin{array}{cc}
 0 & (pq)^* \\
 pq & 0 \\
\end{array}
\right)\in M_2(L(\Gamma))$  with respect to the trace $\tilde\tau:=\tau\otimes\tau_2$ on $M_2(L(\Gamma))$ is 
$$\mu_{\mathrm{free}}=\frac 1{2\pi}\frac{\sqrt{(\lambda_1^2-t^2)(t^2-\lambda_2^2)}}{t(1-t^2)} 1_{[-\lambda_1,-\lambda_2]\cup[\lambda_2,\lambda_1]} dt\,\,+\,\, (1-\beta )\delta_0,
$$
where $dt$ is the Lebesgue measure.
In particular,
$$||pq||=\sqrt{\alpha (1-\beta )}+\sqrt{\beta (1-\alpha )}.$$
\end{thm}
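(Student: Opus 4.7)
The plan is to combine the freeness of $p$ and $q$ in $L(\Gamma)$ with the machinery developed in Section 3 (Theorem \ref{t:mn_zn} and Lemma \ref{l:nu_mn}). Since $\Gamma = \Z_k \star \Z_\ell$, the subalgebras $L(\Z_k)$ and $L(\Z_\ell)$ are free inside $L(\Gamma)$ with respect to $\tau$, so $p$ and $q$ are $\ast$-free; consequently their centerings $E := p - \alpha I$ and $F := q - \beta I$ are free and traceless, and the very definition of freeness gives $\tau((EF)^n) = 0$ for every $n \geq 1$. The hypothesis $\alpha + \beta < 1$ forces $\|PQ\| < 1$, so Lemma \ref{l:nu_mn} applies and yields $\mu_{11} = 0$, $\mu_{00} = 1 - \alpha - \beta$, and $\mu_{10} - \mu_{01} = \alpha - \beta$.

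Substituting these data and $\tau((EF)^n) = 0$ into Theorem \ref{t:mn_zn} produces the infinite family of Chebyshev moment identities
\[
(\alpha(1-\alpha)\beta(1-\beta))^{n/2}\!\!\int_{[0,1]}\!\! T_n\!\Bigl(\tfrac{t-\alpha-\beta+2\alpha\beta}{2\sqrt{\alpha(1-\alpha)\beta(1-\beta)}}\Bigr)\,d\nu(t) \;=\; -(\alpha\beta)^n(1-\alpha-\beta) - (-\beta(1-\alpha))^n(\alpha-\beta),
\]
valid for $n \geq 1$. The key observation is that under the affine change of variable $s = (t-\alpha-\beta+2\alpha\beta)/[2\sqrt{\alpha(1-\alpha)\beta(1-\beta)}]$, the points $t = 0$ and $t = 1$ map to real numbers $s_0, s_1$ both lying outside $[-1,1]$ (because $\alpha + \beta < 1$), so the right-hand side is, up to the same prefactor, a linear combination of $T_n(s_0)$ and $T_n(s_1)$.

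I then invert the Chebyshev moment problem. Using the orthogonality of $\{T_n\}_{n \geq 0}$ against the arcsine weight $(\pi\sqrt{1-s^2})^{-1}\,ds$ on $[-1,1]$ together with the standard Stieltjes/generating-function representation of $T_n(s_0)$ for $|s_0| > 1$ (equivalently, by computing the Cauchy transform of a candidate $\nu$ and matching Laurent coefficients at $\infty$), one recognizes $\nu$, with total mass $2\beta$ fixed by Lemma \ref{l:nu_mn}, as a purely absolutely continuous measure whose density is a combination of Poisson-type kernels centred at $s_0,s_1$. Simplification yields
\[
d\nu(t) \;=\; \frac{1}{\pi}\frac{\sqrt{(\lambda_1^2 - t)(t - \lambda_2^2)}}{t(1-t)}\, 1_{[\lambda_2^2,\lambda_1^2]}(t)\,dt,
\]
where $\lambda_1^2, \lambda_2^2 = \alpha + \beta - 2\alpha\beta \pm 2\sqrt{\alpha(1-\alpha)\beta(1-\beta)} = (\sqrt{\alpha(1-\beta)} \pm \sqrt{\beta(1-\alpha)})^2$, matching the endpoints in the statement.

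To finish I convert $\nu$ to $\mu_{\mathrm{free}}$. Because $\tilde h$ is self-adjoint with spectrum symmetric about $0$ and $\tilde\tau(\tilde h^{2n}) = \tau((pq)^n) = \tfrac{1}{2}\!\int\! t^n\,d\nu(t)$ for $n \geq 1$, the push-forward of $\mu_{\mathrm{free}}$ under $t \mapsto t^2$ equals $\tfrac{1}{2}\nu$ on $(0,\infty)$ plus a point mass at $0$. Undoing the square on each sign produces the claimed continuous density $\frac{1}{2\pi}\sqrt{(\lambda_1^2-t^2)(t^2-\lambda_2^2)}/[t(1-t^2)]$ on $[-\lambda_1,-\lambda_2] \cup [\lambda_2,\lambda_1]$. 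The atom at $0$ is $\tilde\tau(\ker \tilde h) = \tfrac{1}{2}(\tau(\ker pq) + \tau(\ker qp))$; the kernel projection of $pq$ equals $(I-q) + (q \wedge (I-p))$, and the free-projection formula $\tau(r \wedge s) = \max(0,\tau(r)+\tau(s)-1)$ gives $\tau(q \wedge (I-p)) = 0$ and $\tau(p \wedge (I-q)) = \alpha - \beta$, so both $\tau(\ker pq)$ and $\tau(\ker qp)$ equal $1-\beta$, producing the atom $(1-\beta)\delta_0$. The norm $\|pq\| = \lambda_1 = \sqrt{\alpha(1-\beta)} + \sqrt{\beta(1-\alpha)}$ is immediate from the right endpoint of the support. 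The main obstacle will be the Chebyshev inversion: one must check that the contributions from the two out-of-range evaluations $T_n(s_0), T_n(s_1)$ conspire to produce a purely absolutely continuous $\nu$ on $[\lambda_2^2, \lambda_1^2]$ with no atoms at the endpoints.
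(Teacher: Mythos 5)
Your strategy is genuinely different from the paper's: the paper computes the $S$-transforms of the spectral distributions of $p$ and $q$, forms the multiplicative free convolution via $S_{\mu\boxtimes\nu}=S_\mu S_\nu$, and recovers the density by Stieltjes inversion of the resulting Cauchy transform, whereas you propose to feed the freeness relation $\tau((EF)^n)=0$ into the two-projection machinery of Section 3 and then solve the resulting Chebyshev moment problem for $\nu$. That route is viable in principle (uniqueness is not an issue: the polynomials $T_n\circ s$ span all polynomials, and a finite positive measure on a compact interval is determined by its polynomial moments), and your computation of the atom via $\ker(pq)=\mathrm{ran}(I-q)\oplus\mathrm{ran}(q\wedge(I-p))$ and the free intersection formula is a nice alternative to the paper's residue calculation. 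However, two steps do not hold up as written.

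First, you invoke Lemma \ref{l:nu_mn} and Theorem \ref{t:mn_zn} on the strength of the assertion that $\alpha+\beta<1$ forces $\|pq\|<1$. That inequality is true, but it is essentially the conclusion $\|pq\|=\lambda_1<1$ of the theorem you are proving; no independent argument is given, and none is elementary (note that $p\wedge q=0$ alone does not imply $\|pq\|<1$). One can repair this by observing that Section 3 uses $\|PQ\|<1$ only to get $e_{11}=p\wedge q=0$, which in the free case follows from $\tau(p\wedge q)=\max(0,\alpha+\beta-1)=0$, but that requires reworking the standing hypothesis of Section 3 and a non-circular proof of the intersection formula. Second, your ``key observation'' is false. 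Writing $u^2=\alpha(1-\beta)$, $v^2=\beta(1-\alpha)$, $x^2=\alpha\beta$, $y^2=(1-\alpha)(1-\beta)$ (so $uv=xy$), one has $(uv)^nT_n(s_0)=\tfrac{(-1)^n}{2}(u^{2n}+v^{2n})$ and $(uv)^nT_n(s_1)=\tfrac12(x^{2n}+y^{2n})$, while the right-hand side of your identity equals $-(y^2-x^2)x^{2n}-(u^2-v^2)(-1)^nv^{2n}$. Comparing coefficients of the four geometric sequences $u^{2n},v^{2n},x^{2n},y^{2n}$ shows that no $n$-independent combination $c_1T_n(s_0)+c_2T_n(s_1)$ can reproduce it except in degenerate cases: the terms $u^{2n}$ and $y^{2n}$ would be forced to appear. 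The correct structure is that $\int T_n\,d\tilde\nu=-(y^2-x^2)(x/y)^n-(u^2-v^2)(-v/u)^n$, a combination of two \emph{single} geometric sequences with ratios in $(-1,1)$, matched by a density proportional to $\sqrt{1-s^2}/[(u^2+v^2+2uvs)(x^2+y^2-2xys)]$, i.e.\ (by partial fractions) a combination of kernels $\sqrt{1-s^2}/(1-2rs+r^2)$ with $r=x/y$ and $r=-v/u$. Since you yourself identify this inversion as the main obstacle and the one concrete claim you make about it is wrong, the argument is not complete as it stands.
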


\begin{proof}
Let $\mu$ (resp.~$\nu$) be the spectral distribution measure of $p$ (resp.~$q$) with respect to the trace $\tau$ of $L(\Gamma)$.
We now compute the S-transform (cf.\cite[p.30-32]{Voiculescu} or \cite{Voiculescu87}):
\begin{eqnarray*}
\psi_{\mu}(s)&:=&\int_0^{\infty} \frac{t s}{1- t s}d\mu(s) = \int_0^{\infty} (ts +t^2s^2+t^3s^3+\cdots)d\mu(t)\\
&=&m_1(p)s +m_2(p)s^2+m_3(p) s^3+\cdots = \alpha s+\alpha s^2+\alpha s^3+\cdots\\
&=&\frac{\alpha s}{1-s}.
\end{eqnarray*}
Similarly,
$$
\psi_{\nu}(s)=\frac{\beta s}{1-s}.
$$
The formal power series $\psi_\mu$ (resp.~$\psi_\mu$) has to satisfy the equations $\chi_\mu(\psi_\mu(s))=s$, $\psi_\mu(\chi_\mu(z))=z$ and $S_\mu(z)=\frac{z+1}{z}\chi_\mu(z)$ (resp.~$\chi_\nu(\psi_\nu(s))=s$, $\psi_\nu(\chi_\nu(z))=z$ and $S_\nu(z)=\frac{z+1}{z}\chi_\nu(z)$).
Solving for $\chi_\mu,S_\mu,\chi_\nu,S_\nu$, we get
$$z=\psi_{\mu}(s) \iff (1-s)z =\alpha  s \quad \imply \quad  \chi_{\mu}(z)=s=\frac{z}{\alpha +z}$$
$$z=\psi_{\nu}(s)  \quad \imply \quad  \chi_{\nu}(z)=s=\frac{z}{\beta +z}.$$
$$S_{\mu}(z)=\frac{z+1}{z}\chi_{\mu}(z)=\frac{z+1}{\alpha +z}$$
$$S_{\nu}(z)=\frac{z+1}{z}\chi_{\nu}(z)=\frac{z+1}{\beta +z}$$
By \cite[Theorem 3.6.3]{Voiculescu}, the multiplicative free convolution is 
$$S_{\mu\boxtimes\nu}(z)= S_{\mu}(z)S_{\nu}(z)=\frac{(z+1)^2}{(\alpha +z)(\beta +z)}.$$
Thus
$$\chi_{\mu\boxtimes\nu}(z)= \frac{z}{z+1}S_{\mu\boxtimes\nu}(z)=\frac{z(z+1)}{(\alpha +z)(\beta +z)},$$
and 
$$ s=\chi_{\mu\boxtimes\nu}(z)\quad\iff\quad (\alpha +z)(\beta +z)s=z(z+1)$$
$$\imply \psi_{\mu\boxtimes\nu}(s)=z=\frac12\frac{1-(\alpha +\beta )s\pm \sqrt{d(s)}}{s-1}$$
where $d(s)$ is the discriminant 
$$d(s):=(\alpha -\beta )^2 s^2+(4\alpha \beta -2\alpha -2\beta )s+1.$$ 
Letting $s=\frac1{\lambda}$, the discriminant $d(s)$  can be written as 
$$d(\frac1{\lambda})=\frac{(\lambda-\lambda_1^2)(\lambda-\lambda_2^2)}{\lambda^2},$$
where
\begin{eqnarray*}
\lambda_1:=\sqrt{\alpha (1-\beta )}+\sqrt{\beta (1-\alpha )}\qquad\lambda_2:=\sqrt{\alpha (1-\beta )}-\sqrt{\beta (1-\alpha )}.
\end{eqnarray*}
Let $\psi:=\psi_{\mu\boxtimes\nu}$. Then the Cauchy transform $G(\lambda)$ of $\mu\boxtimes\nu$ is 
$$G(\lambda)-\frac1{\lambda}=\frac1{\lambda}\psi(\frac1{\lambda})=\frac1{2\lambda}\frac{1-(\alpha +\beta )\frac1{\lambda}+\sqrt{d(\frac1{\lambda})}}{\frac1{\lambda}-1}=
\frac{\lambda-\alpha -\beta + \sqrt{(\lambda-\lambda_1^2)(\lambda-\lambda_2^2)}}{2\lambda(1-\lambda)}.
$$
By Stieltjes inversion formula,  $\mu\boxtimes\nu$ is given by
\begin{eqnarray}\label{e:muxnu}
\mu\boxtimes\nu(x)&=&\lim_{y\to 0^+}\left(- \frac1{\pi}\im\, G(x+iy)\right)\,dx+c_0\delta_0\\
\nonumber&=&\frac1{\pi}\frac{\sqrt{(\lambda_1^2-x)(x-\lambda_2^2)}}{2x(1-x)}1_{[\lambda_2^2,\lambda_1^2]}(x)\,dx+c_0\delta_0.
\end{eqnarray}
where $c_0:=1-\beta $, is the residue of $G$ at the simple pole $0$, and $dx$ is the Lebesgue measure.
Since $0< \beta \le \alpha <1$ and $\alpha +\beta <1$, the integral
\begin{equation}\label{e:intm0}
\int_{\lambda_2^2}^{\lambda_1^2}\frac1{\pi}\frac{\sqrt{(\lambda_1^2-x)(x-\lambda_2^2)}}{2x(1-x)}\,dx=\beta ,
\end{equation}
(cf.~\cite[Example 4.3.3]{Inge}).
By \cite[Remarks 3.6.2(iii)]{Voiculescu}, we get that $\mu\boxtimes\nu=\mu_{p^{\frac12}qp^{\frac12}}$, the spectral distribution of $p^{\frac12}qp^{\frac12}$ with respect to the (unique) trace $\tau$ on $L(\Gamma)$. Since $\tau((p^{\frac12}qp^{\frac12})^n)=\tau((pqp)^n)$ for every $n\in\N_0$, $\mu\boxtimes\nu=\mu_{pqp}$, the spectral distribution of $pqp$ w.r.t. $\tau$.
Define the self-adjoint operator 
$$S:=\left(
\begin{array}{cc}
 0 & (pq)^* \\
 pq & 0 \\
\end{array}
\right)\in M_2(L(\Gamma)),
$$
and let $\mu_{\mathrm{free}}:=\mu_S$ be the spectral distribution of $S$ with respect to the normal faithful trace $\tau\otimes\tau_2$, where $\tau_2:=\frac12 Tr$ is the normalized trace on $M_2(\C)$.
By \cite[Eq.~(2.6)]{HaagerupRamirezSolano}, $m_{2n}(S)=\tau(((pq)^*(pq))^{n})=\tau((qpq)^{n})=\tau((pqp)^{n})$.
Thus the image measure $\phi(\mu_S)$ of $\mu_S$ by the map $\phi:t\mapsto t^2$ is $\phi(\mu_{S})=\mu_{pqp}=\mu\boxtimes\nu$.
Substituting $t^2=x$ in Eq.~(\ref{e:muxnu}), we get
\begin{eqnarray*}
\mu_{S}(t)
&=&\frac1{\pi}\frac{\sqrt{(\lambda_1^2-t^2)(t^2-\lambda_2^2)}}{t(1-t^2)} 1_{[\lambda_2,\lambda_1]}(t) dt \,\,+ c_0\delta_0.
\end{eqnarray*}
Symmetrizing,
$$
\mu_{S}(t)=\frac 1{2\pi}\frac{\sqrt{(\lambda_1^2-t^2)(t^2-\lambda_2^2)}}{t(1-t^2)} 1_{[-\lambda_1,-\lambda_2]\cup[\lambda_2,\lambda_1]} dt+ c_0\delta_0,
$$and by
 \cite[Eq~(2.7)]{HaagerupRamirezSolano}, 
 $$||pq||=||S||=\max(\supp(\mu_S))=\lambda_1.$$  
\end{proof}
\begin{exam}[free case]\label{ex:freeCase}
Let $\Gamma:=\Z_2\star \Z_3$ be the free product with generators
$s\in \Z_2$, $c\in \Z_3$. Let $p=(e+s)/2$, $q=(e+c+c^2)/3\in \C\Gamma$. Then by Theorem \ref{t:freemeasure}
$$||pq||=\sqrt{\frac12(1-\frac13)}+\sqrt{\frac13(1-\frac12)}=\frac{1+\sqrt2}{\sqrt6}=0.985\ldots.$$
Let $\Gamma':=\Z_3\star \Z_4$ be the free product with generators $c\in \Z_3$, $d\in\Z_4$.
and let $q=(e+c+c^2)/3$, $r=(e+d+d^2+d^3)/4\in \C\Gamma'$. Then by Theorem \ref{t:freemeasure}
$$||qr||=\sqrt{\frac13(1-\frac14)}+\sqrt{\frac14(1-\frac13)}=\frac{\sqrt2+\sqrt3}{\sqrt{12}}=0.908\ldots.$$
\end{exam}

 \section{\textbf{Results}}\label{s:results}
 Let $P,Q,R\in L(T)$ be the projections given by
\begin{equation*}
P:=\frac{I+S}{2}\qquad Q:=\frac{I+C+C^2}{3}\qquad R:=\frac{I+D+D^2+D^3}{4},
\end{equation*}
where $S:=D^2$ and $C$, and $D$ are the generators of $T$ shown in Figure \ref{f:generatorsCDofT}.
Let $\tau$ be the trace on $\C T$ coming from the group von Neumann algebra $L( T)$, as described in \cite[Section 2]{HaagerupRamirezSolano}.
 Using the trace properties of $\tau$, the even moments for the self-adjoint operator
 \begin{equation}\label{e:tildeh}
\tilde h:=\sqrt{12}\left(
  \begin{array}{cc}
    0 & (QR)^* \\
    QR & 0 \\
  \end{array}
\right)\in M_2(C_r^*(T))
\end{equation}
are given by
$$m_{2n}(\tilde h) =\tau((\tilde h)^{2n}) =12^{n} \tau((QRQ)^{n})=12^n\tau((QR)^n),\quad n\in\N,$$
where 
$$h:=\sqrt{12}QR=\frac1{\sqrt{12}}(I+C+C^2)(I+D+D^2+D^3).$$
Our goal is to compute as many as possible of the moments
\begin{eqnarray}\label{e:mn}
\nonumber m_n&:=&m_n(h^*h)\\
\nonumber&=&m_{2n}(\tilde h)\\
&=&\tau(((I+C+C^2)(I+D+D^2+D^3))^n),\quad n\in\N,
\end{eqnarray}
because by Section 4 in \cite{HaagerupRamirezSolano}  we can use them to estimate the norm $|| h||$ (which is equal to $|| \tilde h||$):
$$\lim_{n\to\infty}m_n^{1/2n}=\lim_{n\to\infty}\sqrt{12}\tau((QRQ)^n)^{1/2n}= \sqrt{12}||QRQ||^{1/2}=\sqrt{12}||QR||=|| h||.$$

Rather than computing the moments $m_n$ directly, we compute the ``cyclic reduced" numbers 
$$\zeta_n:=\tau(((C+C^2)(D+D^2+D^3))^n),\quad n\in\N.$$
These numbers are related by the formula 
\begin{equation}\label{e:mn_zn}
\zeta_n - \frac{5+(-2)^n}{12}= 2 \sum_{k=0}^n \frac{c_{n,k}}{12^k} m_k, \qquad n\in \N_0,
\end{equation}
where $m_0:=1/4$, ($\zeta_0:=1$), and $c_{n,k}$ is the coefficient of $t^k$ in the substituted Chebyshev polynomial of the first kind $6^{n/2}T_n( \sqrt6(t-\frac5{12}))$. 
To obtain this formula (\ref{e:mn_zn}), we use Lemma~\ref{l:nu_mn} and Theorem~\ref{t:mn_zn} with $\alpha=\tau(Q)=1/3$,  $\beta=\tau(R)=1/4$, and note that from Lemma~\ref{l:nu_mn},  $\int_{[0,1]}t^k\,d\nu=2 m_k/12^k$.

To compute the cyclic reduced numbers $\zeta_n$, we use the inner product 
$$\langle x,y\rangle :=\tau(y^*x)$$ on $L(T)$ associated with the trace $\tau$. 
Recall that the trace restricted to the group ring $\C T$ (considered as a subalgebra of $C_r^*(T)\subset L(T)$)
satisfies
$$\tau(\sum_{x\in Y} c_x x)=  \left\{\begin{array}{cc}
    c_e & \mathrm{if }\,\, e\in Y  \\ 
    0 & \mathrm{else} \\ 
  \end{array}\right.,
$$ 
for any finite subset $Y\subset T$ and any set of complex numbers $(c_x)_{x\in T}$ indexed by $T$.
In particular,  $\tau(x)=1$ if $x=e$ else it is zero.
For instance, expanding $(I+C+C^2)(I+D+D^2+D^3)$ we see that the identity occurs only once and thus $m_1=1$.
Thus 
\begin{equation}\label{e:innerproduct}
\langle \sum_{x\in X} c_x\, x, \sum_{y\in Y} c_y\, y\rangle =  \sum_{x\in X}  \sum_{y\in Y} c_x\overline{c_y} \langle x, y\rangle,
\end{equation}
for any finite subsets $X,Y\subset T$, and any $c_x,c_y\in \C$.

Define
$$a:=C+C^2\qquad b:=D+D^2+D^3,$$
and recall that $C^3=I$, $D^4=I$, and that $a=a^*$ and $b=b^*$. Then
$$\zeta_n=\tau((ab)^n),\quad n\in\N.$$
Using computers, we compute $\zeta_n$  as follows.
We store in one file the terms of the expanded sum $(ab)^n$, (one term per line), and in another file those of $(ba)^n$. Here a term is a word of length $n$  in the letters $C,C^{-1}, D,D^{-1},D^2$.
Composition of the letters in the word is done using the algorithm described in section \ref{s:amenability}, which yields a reduced doubletree. This doubletree is then converted into a sequence of zeroes and ones and separators and pointers by serializing the range and domain trees  using the preorder traversal method. We save this sequence in base 64 in a single line.
To save time and space we read the file for $(ab)^n$ and apply $ab$ (i.e. we multiply each word with 
$CD,CD^2,CD^{-1},C^{-1}D,C^{-1}D^2$, and $C^{-1}D^{-1}$) in order to obtain the file for
$(ab)^{n+1}$. To save space we store not only the word but also its frequency.
Taking the inverse of each word  in the file for $(ab)^n$ gives the file for $(ba)^n$.
The inverse of a doubletree $x\in T$ with range tree $r(x)$ and domain tree $d(x)$ is the doubletree with range tree $d(x)$ and domain tree $r(x)$ (i.e. it simply swaps the domain and range trees).
The inner product
$$\zeta_{2n}=\langle (ab)^n, (ba)^n\rangle, \quad n\in\N$$
is the intersection of the sorted files for $(ab)^n$ and $(ba)^n$. We used the GNU sort program to sort the files.
The odd ``cyclic reduced" numbers $\zeta_{2n+1}$ are computed similarly
$$\zeta_{2n+1}=\langle (ab)^{n+1}, (ba)^n\rangle=\langle (ab)^{n}, (ba)^{n+1}\rangle,\quad n\in\N_0.$$
To reduce the size of the files by about one half, we compute instead the numbers
$\zeta_{n}^s, \zeta_{n}^e$ given in Eqs.~(\ref{e:zeta_se_even}),(\ref{e:zeta_se_odd});
the idea is the following: Since $(ab)^n=Cb(ab)^n+C^{-1}b(ab)^n$, and $b(ab)^n$ is self-adjoint, we can obtain the terms of the expanded sum for  $C^{-1}b(ab)^n$ from those of $Cb(ab)^n$ by using the ``reverse-inverse" map $\pi\circ J=Ad(D^2)\circ R$ given in  Eq.~(\ref{e:ri_EQ_D2_R_D2}). For instance the reverse-inverse of the word $CDC^2D^2$ is $D^2\,R(CDC^2D^2)\,D^2=C^{-1}D^{-1}C^{-2}D^{-2}$, which corresponds to reversing the order of the letters and taking the inverse. The letter $s$ (resp.~$e$) in $\zeta_{n}^s$ (resp.~$\zeta_{n}^e$) stands for that we are only keeping the words which are smaller than (resp.~equal to) their corresponding reverse-inverses.
The comparison is done using the lexicographic order of the serialized form of the doubletrees.
The relation between these numbers and the reduced cyclic number $\zeta_n$ is given by Lemma \ref{l:zeta_se_zetarel}
$$
\zeta_{n}=2\zeta_{n}^s+4 \zeta_{n}^e,\quad n\ge2.
$$
We wrote two programs in $C\#$ and Haskell, both using parallel programming, to calculate the numbers 
$\zeta_{n}^s, \zeta_{n}^e$, which can be downloaded at
\begin{verbatim}
https://github.com/shaagerup/ThompsonGroupT/ 
https://github.com/mariars/ThompsonGroupT/
\end{verbatim}
The size of each of the two files for computing $\zeta_{28}^s$ is about {\color{black}281} GB and {\color{black}285} GB. They were run in a desktop computer with 2 TB of SSD hard disk, and on the Abacus 2.0 supercomputer 
from the DeIC National HPC Centre.
The series of numbers $\zeta_n^s, \zeta_n^e,\zeta_n, m_n$ are shown in Table \ref{t:SeriesOfNumbers}.
\begin{table}[b]
\begin{tabular}{lllllll}
\hline
$n$ & $\zeta_n^s$ & $\zeta_n^e$ & $\zeta_n$ & $m_n(h^*h)$\\
\hline
1 & 0 & 0 & 0 & 1\\
2 & 0 & 0 & 0 & 6\\
3 & 0 & 0 & 0 & 42\\
4 & 0 & 0 & 0 & 318\\
5 & 1 & 0 & 2 & 2528\\
6 & 0 & 0 & 0 & 20790\\
7 & 0 & 0 & 0 & 175344\\
8 & 4 & 0 & 8 & 1508158\\
9 & 36 & 0 & 72 & 13177554\\
10 & 70 & 3 & 152 & 116636378\\
11 & 64 & 1 & 132 & 1043596346\\
12 & 524 & 1 & 1052 & 9423929906\\
13 & 2228 & 4 & 4472 & 85780131568\\
14 & 8160 & 16 & 16384 & 786252907282\\
15 & 21617 & 32 & 43362 & 7251162207110\\
16 & 81644 & 102 & 163696 & 67241091321510\\
17 & 279531 & 697 & 561850 & 626619942680948\\
18 & 1006816 & 4990 & 2033592 & 5865627675769158\\
19 & 3429416 & 13057 & 6911060 & 55130780282172364\\
20 & 12284412 & 35247 & 24709812 & 520110723876289138\\
21 & 43215686 & 89274 & 86788468 & 4923701716098043110\\
22 & 154863150 & 246102 & 310710708 & 46759540919860581346\\
23 & 550890233 & 763137 & 1104833014 & 445382340814268264936\\
24 & 1982133410 & 2484953 & 3974206632 & 4253954798148920432622\\
25 & 7128125209 & 9275681 & 14293353142 & 40735421620966779279998\\
26 & 25797672490 & 34858087 & 51734777328 & 391022235546378412228050\\
27 & 93561508424& 119608865&  187601452308  &  3761992784005490950198026\\
28 & 341014479116&411320336 &  683674239576  &  36271465945557216051920334\\
\hline
\end{tabular}
\caption{ The series of numbers for $h=\frac1{\sqrt{12}}(I+C+C^2)(I+D+D^2+D^3)$ .}
\label{t:SeriesOfNumbers}
\end{table}

\begin{table}[b]
\begin{tabular}{llllll}
\hline
$n$ &  $m_n^{\frac1{2n}}$ & $\sqrt{\frac{m_{n}}{m_{n-1}}}$& $\alpha_n$ & $\lambda_{\mathrm{max}}(M_n)$ & $\alpha_{n-1}+\alpha_n$\\
\hline
1 & 1.00000 & 2.00000 & 2.00000 & 2. & - - - - -\\
2 & 1.56508 & 2.44949 & 1.41421 & 2.44949 & 3.41421\\
3 & 1.86441 & 2.64575 & 1.73205 & 2.71519 & 3.14626\\
4 & 2.05496 & 2.75162 & 1.41421 & 2.82843 & 3.14626\\
5 & 2.18916 & 2.81952 & 1.77951 & 2.9224 & 3.19373\\
6 & 2.28992 & 2.86773 & 1.41111 & 2.97266 & 3.19062\\
7 & 2.36899 & 2.90414 & 1.75248 & 3.01653 & 3.16359\\
8 & 2.43306 & 2.93277 & 1.42622 & 3.04235 & 3.17870\\
9 & 2.48626 & 2.95593 & 1.78930 & 3.06842 & 3.21552\\
10 & 2.53129 & 2.97509 & 1.43494 & 3.08578 & 3.22424\\
11 & 2.57000 & 2.99123 & 1.75349 & 3.10309 & 3.18844\\
12 & 2.60371 & 3.00504 & 1.43767 & 3.11454 & 3.19117\\
13 & 2.63339 & 3.01701 & 1.78223 & 3.12665 & 3.21990\\
14 & 2.65975 & 3.02753 & 1.45087 & 3.13534 & 3.23310\\
15 & 2.68337 & 3.03685 & 1.76147 & 3.14458 & 3.21234\\
16 & 2.70467 & 3.04518 & 1.45124 & 3.15121 & 3.21271\\
17 & 2.72399 & 3.05270 & 1.77229 & 3.15841 & 3.22353\\
18 & 2.74163 & 3.05953 & 1.45749 & 3.16373 & 3.22978\\
19 & 2.75780 & 3.06577 & 1.76751 & 3.16956 & 3.22500\\
20 & 2.77270 & 3.07150 & 1.46238 & 3.17394 & 3.22989\\
21 & 2.78647 & 3.07679 & 1.76938 & 3.1788 & 3.23176\\
22 & 2.79926 & 3.08169 & 1.46384 & 3.18249 & 3.23321\\
23 & 2.81116 & 3.08625 & 1.76757 & 3.1866 & 3.23141\\
24 & 2.82228 & 3.09051 & 1.46904 & 3.18976 & 3.23662\\
25 & 2.83269 & 3.09449 & 1.76906 & 3.19332 & 3.23810\\
26 & 2.84247 & 3.09824 & 1.47004 & 3.19607 & 3.23909\\
27 & 2.85168 & 3.10176 & 1.76808 & 3.19917 & 3.23811\\
28 & 2.86036 & 3.10509 & 1.47534 & 3.2016 & 3.24341\\\hline
\end{tabular}
\caption{ Estimating the norm $\frac1{\sqrt{12}}||(I+C+C^2)(I+D+D^2+D^3)||$ .}
\label{t:rootsRationsAlphasNorms}
\end{table}

\begin{figure}
  \includegraphics[scale=.6]{./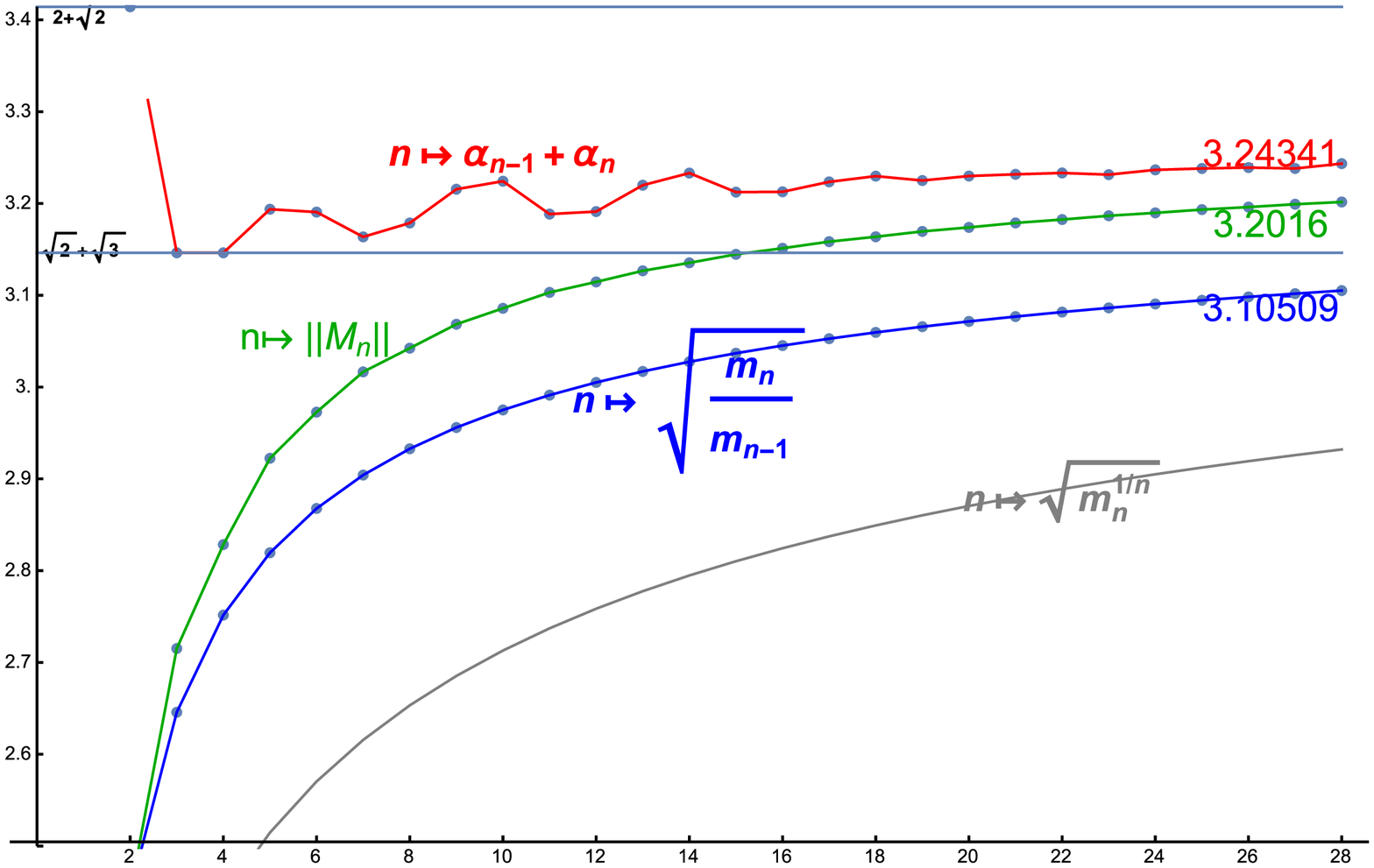}\\
  \caption{Estimating the norm $\frac{1}{\sqrt{12}}||(I+C+C^2)(I+D+D^2+D^3)||.$}\label{f:Mnnorms}
\end{figure}

In comparison, when one considers the free product $\Z_3\star \Z_4$ on two generators $c\in\Z_3$, $d\in \Z_4$,
the measure $\mu_{\text{free}}$ in Eq.~(\ref{e:mufree})  based on $c$, $d$ instead of $C$, $D$, is computed explicitly using Theorem \ref{t:freemeasure}
with $\alpha=\tau(\frac{e+c+c^2}3)=1/3$, $\beta=\tau(\frac{e+d+d^2+d^3}4)=1/4$, (and substituting $x=\sqrt{12}\,t$). Hence
the moments $m_n^{\text{free}}$ can be computed explicitly with Eq.~(\ref{e:mufree}) together with Lemma \ref{l:mn_free}.
One gets
$m^{\text{free}}_1=1$, and
$$ m^{\text{free}}_n=12 m^{\text{free}}_{n-1}-6\sum_{j=0}^{\left\lfloor \frac{n-2}{2}\right\rfloor} 
6^j  5^{-2 j+n-2}\frac{\binom{2 j}{j}}{j+1} \binom{n-2}{2 j}, \qquad n\ge2.
$$

Estimations of the norm $||h||$ are done as follows.
By \cite[Section 4]{HaagerupRamirezSolano} and the theory of orthogonal polynomials we can define a sequence of positive numbers 
$$\alpha_n:=\left(\frac{D_{n-2}D_n}{D_{n-1}^2}\right)^{1/2}, \qquad n\in\N,$$
where 
$$D_{-1}:=1,\quad D_n:=\det([m_{i+j}(\tilde h)]_{i,j=0}^n).$$
For instance $$D_4=\det\left(
\begin{array}{ccccc}
 m_0 & 0 & m_1 & 0 & m_2 \\
 0 & m_1 & 0 & m_2 & 0 \\
 m_1 & 0 & m_2 & 0 & m_3 \\
 0 & m_2 & 0 & m_3 & 0 \\
 m_2 & 0 & m_3 & 0 & m_4 \\
\end{array}
\right),$$
where $m_0:=1/4$ and $m_n$ is defined in Eq.~(\ref{e:mn}).
Let $$M_n=\left(
     \begin{array}{ccccc}
              0 & \alpha_1 &         &  & \\
       \alpha_1 & 0        & \alpha_2&  &0 \\
                & \alpha_2 & \ddots  & \ddots &\\
                &          &  \ddots & \ddots  &\alpha_n\\
       0        &          &         &\alpha_n & 0 \\
     \end{array}
   \right).$$

Then by \cite[Proposition 4.4]{HaagerupRamirezSolano}
  \begin{equation}\label{e:alphas_h}
    \liminf_{n\to\infty}(\alpha_{n-1}+\alpha_n)\quad \le \quad ||h||\quad \le\quad  \sup_{n\ge 3}(\alpha_{n-1}+\alpha_n).
  \end{equation}

Moreover by \cite[Proposition 4.1 and Proposition 4.7]{HaagerupRamirezSolano},
the roots $(m_n^{\frac1{2n}})_{n=1}^{\infty}$, the ratios $(\sqrt{\frac{m_n}{m_{n-1}}}\,)_{n=1}^{\infty}$,
and the norms $(||M_n||)_{n=1}^{\infty}$ are increasing sequences that converge to $||h||$ and satisfy
$$m_n^{\frac1{2n}}\le \sqrt{\frac{m_n}{m_{n-1}}}\le ||M_n||\le \frac1{\sqrt{12}}||(I+C+C^2)(I+D+D^2+D^3)||,\quad n\in \N.$$

We list these sequences in Table \ref{t:rootsRationsAlphasNorms}, and plot them in Figure \ref{f:Mnnorms} for $n=1,\ldots, 28$. 
When $n\ge 16$, the norm $||h||\ge \sqrt{2}+\sqrt{3}$. The best lower bound for $h$ that we can obtain from our data is
$$\frac1{\sqrt{12}}||(I+C+C^2)(I+D+D^2+D^3) ||\ge 3.2016,$$
and a very likely lower bound for $h$ is
$$\frac1{\sqrt{12}}||(I+C+C^2)(I+D+D^2+D^3) ||\ge \alpha_{27}+\alpha_{28}=3.24341,$$
because of Eq~(\ref{e:alphas_h}) and because the sequence $(\alpha_{n-1}+\alpha_n)_{n=1}^\infty$ appear to be monotonically increasing for $n\ge 19$.
By making a least squares fitting of the numbers $\lambda^2_{\max}(M_{18}),\ldots,\lambda^2_{\max}(M_{28})$ to a function of the form $f(n)=a-b(n-c)^{-d}$  we get
$$f(n)={10.79\, -\frac{8.952}{(n+0.2)^{0.841}}}.$$
In particular, the extrapolation method suggests that 
\begin{equation}\label{e:extrapolation3.28}
\frac1{\sqrt{12}}||(I+C+C^2)(I+D+D^2+D^3) ||\approx \sqrt{10.79}=3.28.
\end{equation}
This and Theorem \ref{t:QRleQP} suggest that $F$ might be non-amenable.

 

 \subsection{Spectral distribution measure}
Recall that $h=\sqrt{12}QR$.
Since $\tilde h$ defined in Eq.~(\ref{e:tildeh}) is self-adjoint, we get by the spectral theorem that
there is a unique probability measure $\mu_{\tilde h}$ to $\R$ with support (cf.~\cite[Section 2]{HaagerupRamirezSolano})
$$\supp(\mu_{\tilde h})\subset [-||\tilde h||,||\tilde h||]\subset [-(1+q),(1+q)],$$
where $||\tilde h||=||h||\le 2+\sqrt{2}=1+q$ by Theorem \ref{t:QRleQP}. 
 In this subsection, we will estimate the spectral distribution measure $\mu_{\tilde h}$, as it was done in \cite[Section 5]{HaagerupRamirezSolano}.
The Hilbert space $L^2([-(q+1),(q+1)],dt)$ can be equipped with the orthonormal basis 
$$\sqrt{\frac{n+\frac12}{1+q}}P_n(\frac{t}{q+1}),\quad n\in \N_0,$$
where $P_n$, $n\in \N_0$, are the Legendre polynomials.
The Hilbert space $L^2([-(q+1),q+1],\frac{1}{\pi\sqrt{(1+q)^2-t^2}}dt)$ can be equipped with the orthonormal basis
$$T_0(\frac{t}{q+1}),\quad \sqrt{2}T_n(\frac{t}{q+1}),\quad n\in\N,$$
where $T_n$, $n\in\N_0$ are the Chebyshev polynomials of the first kind.

By  \cite[Eq. (5.3)]{HaagerupRamirezSolano}, the density of $\mu_{\tilde h}$ with respect to the Lebesgue measure can be approximated by
$$\rho'_N(t)=\sum_{n=0}^{2N} \frac{n+\frac12}{q+1} \left( \int_{-(q+1)}^{q+1} P_n(\frac{s}{q+1})d \mu_{\tilde h}(s)\right)P_n(\frac{t}{1+q})$$
and by
\begin{equation}\label{e:density}
 \rho_N(t)=\sum_{n=0}^{2N} c_n^2\left( \int_{-(q+1)}^{q+1} T_n(\frac{s}{q+1})d \mu_{\tilde h}(s)\right) T_n(\frac{t}{1+q})\frac{1}{\pi\sqrt{(1+q)^2-t^2}},
 \end{equation}
 where $c_0:=1$ and $c_n:=\sqrt{2}$, $n\in\N$.
 Since $\mu_{\tilde h}$ is a symmetric measure, all the odd terms in Eq.~(\ref{e:density}) are zero.
Using the moments $m_n$  in Eq.~(\ref{e:mn}) (with $m_0=1/4$) and the formula 
$$\int_{-(q+1)}^{q+1} t^{2n}\,d\mu_{\tilde h}=m_{2n}(\tilde h)=m_n,\qquad n\in\N_0,$$
we compute the ``Chebyshev" density $\rho_{28}$ for $0\le t\le 2+\sqrt{2}$ and plot it in Figure \ref{f:rho28}, together with the corresponding ``free" density. We are more interested in the tail of the measure, however, because it gives us an estimate of the norm $||\tilde h||=||h||$:
 $$||h||=\max\{|t|\,\mid t\in\supp(\mu_{\tilde h})\},$$
 (cf.~\cite[Section 2]{HaagerupRamirezSolano}).
 We plot $\rho_{28}$ in the tail interval $[\sqrt2+\sqrt3,\,\,2+\sqrt2]$ in Figure \ref{f:rho28tails}.
This shows that $\mu_{\tilde h}$ has very little mass in $[3.22,3.414]$, hence $||h||$ can be any number in $[3.22,3.414]$ including the extrapolated number $3.28$ found in Eq.~(\ref{e:extrapolation3.28}).

\begin{figure}
  \includegraphics[scale=.4]{./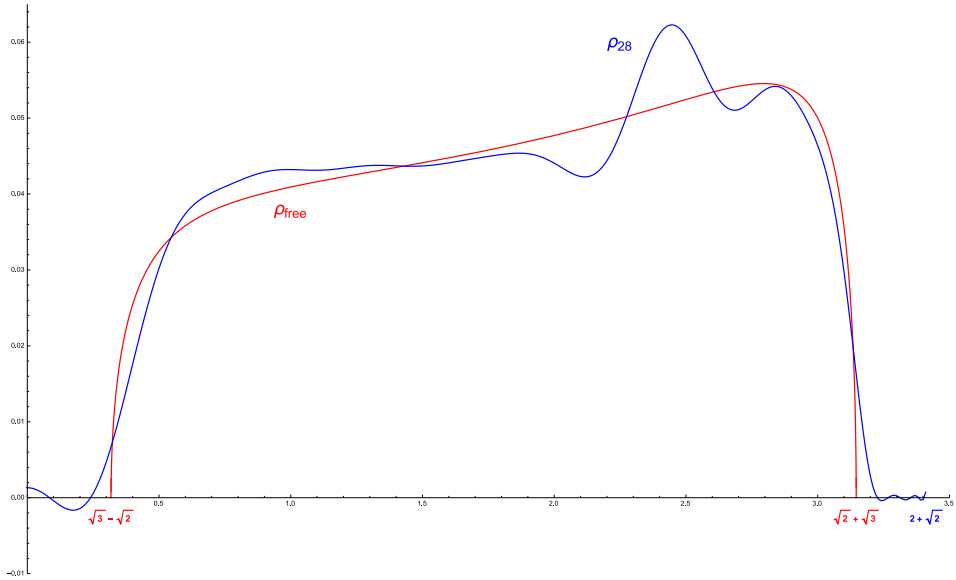}\\
  \caption{Estimating the Chebyshev density for $\mu_{\tilde h}$ where $h=\frac{1}{\sqrt{12}}(I+C+C^2)(I+D+D^2+D^3)$.}\label{f:rho28}
\end{figure}

\begin{figure}
  \includegraphics[scale=.5]{./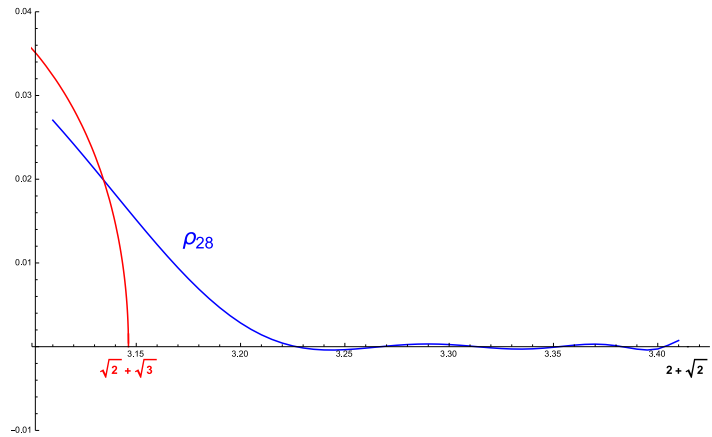}\\
  \caption{Estimating the tail of Chebyshev density for $\mu_{\tilde h}$ where  $h=\frac{1}{\sqrt{12}}(I+C+C^2)(I+D+D^2+D^3)$.}\label{f:rho28tails}
\end{figure}



\subsection*{Acknowledgments.}
We are very grateful with the DeIC National HPC Centre for allowing us to use the supercomputer Abacus 2.0 to run some of our computer code.
The first and third author would like to thank  Steen Thorbj\o rnsen, Erik Christensen, and Wojciech Szymanski for the nice discussions that they had with them.
The second author was supported by the Villum Foundation under the project ``Local and global structures of groups and their algebras" at University of Southern Denmark, and by the ERC Advanced Grant no. OAFPG 247321, and partially supported by the Danish National Research Foundation (DNRF) through the Centre for Symmetry and Deformation at University of Copenhagen, and the Danish Council for Independent Research, Natural Sciences.
 The third author was supported by the Villum Foundation under the project ``Local and global structures of groups and their algebras" at University of Southern Denmark, and by the ERC Advanced Grant no. OAFPG 247321, and by the Center for Experimental Mathematics at University of Copenhagen.

\appendix

\section{\textbf{free moments}}
\begin{lem}\label{l:mn_free}
Let
\begin{equation}\label{e:intfree} 
m_n^{\mathrm{free}}:=\int_{\sqrt{3}-\sqrt{2}}^{\sqrt{3}+\sqrt{2}} \frac{t^{2n}}{\pi}\frac{\sqrt{24-(t^2-5)^2}}{t(12-t^2)}dt,\quad n\in\N.
\end{equation}
Then $m_1^{\mathrm{free}}=1$ and 
$$m_{n+1}^{\mathrm{free}}=12m_n^{\mathrm{free}}-6\sum_{j=0}^{\lfloor {\frac{n-1}2} \rfloor} C_j\binom{n-1}{2j}6^j 5^{n-1-2j},\quad n\in\N$$
where $C_j:=\frac{1}{j+1}\binom{2j}j$, $j\in\N_0$ are the Catalan numbers.
\end{lem}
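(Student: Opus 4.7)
The plan is to normalize the integral by substitution, derive the recurrence from a single cancellation identity, and separately verify the initial value.

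First, substitute $u = t^2$ to absorb the factor $t$ in the denominator, and then shift $v = u - 5$ (using $(\sqrt3 \pm \sqrt2)^2 = 5 \pm 2\sqrt6$) to recast the integral as
$$m_n^{\mathrm{free}} = \frac{1}{2\pi}\int_{-2\sqrt6}^{2\sqrt6} \frac{(v+5)^{n-1}\sqrt{24-v^2}}{7-v}\,dv,\qquad n \ge 1.$$
The key observation is the identity $(v+5) - 12 = -(7-v)$, which cancels the pole in
$$m_{n+1}^{\mathrm{free}} - 12\, m_n^{\mathrm{free}} = -\frac{1}{2\pi}\int_{-2\sqrt6}^{2\sqrt6}(v+5)^{n-1}\sqrt{24-v^2}\,dv.$$

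Expanding $(v+5)^{n-1}$ binomially, odd powers of $v$ drop out by symmetry. For even $k = 2j$, the trigonometric substitution $v = 2\sqrt6\sin\theta$ reduces $\int v^{2j}\sqrt{24-v^2}\,dv$ to a Beta integral equal to $\pi C_j/(2\cdot 4^j)$ via $\Gamma(j+1/2)\Gamma(3/2)/\Gamma(j+2)$, where $C_j = \binom{2j}{j}/(j+1)$. Hence
$$\frac{1}{2\pi}\int_{-2\sqrt6}^{2\sqrt6} v^{2j}\sqrt{24-v^2}\,dv = 6^{j+1} C_j,$$
and substituting into the binomial expansion yields precisely the claimed recurrence $m_{n+1}^{\mathrm{free}} = 12\,m_n^{\mathrm{free}} - 6\sum_j C_j\binom{n-1}{2j}6^j 5^{n-1-2j}$.

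For the initial value $m_1^{\mathrm{free}} = 1$, the cleanest argument uses Theorem~\ref{t:freemeasure}: $\mu_{\mathrm{free}}$ is the spectral distribution of the operator $\tilde h$ (the analogue of (\ref{e:tildeh})) built from $q = (e+c+c^2)/3$ and $r = (e+d+d^2+d^3)/4$ in $\Z_3 \star \Z_4$. A short trace calculation (using $q^2 = q$, $r^2 = r$, and the tracial property) gives $m_1^{\mathrm{free}} = \tilde\tau(\tilde h^2) = 12\,\tau(qr)$, and $\tau(qr) = 1/12$ because in the free product $\tau(c^i d^j) = \delta_{i,0}\delta_{j,0}$. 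Alternatively, one could evaluate the integral directly using the standard Hilbert transform of the semicircle, $\int_{-1}^{1}\sqrt{1-u^2}/(a-u)\,du = \pi(a-\sqrt{a^2-1})$, with $a = 7/(2\sqrt6)$.

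The only real obstacle is spotting the cancellation $(v+5) - 12 = -(7-v)$; once observed, the rest is standard (semicircle moments plus a binomial expansion). The appearance of Catalan numbers is not accidental: up to rescaling, $\sqrt{24-v^2}\,dv/(2\pi\cdot 6)$ is Wigner's semicircle law on $[-2\sqrt6, 2\sqrt6]$, whose even moments are $6^j C_j$.
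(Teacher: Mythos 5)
Your proposal is correct and follows essentially the same route as the paper: substitute $x=t^2$, use the factor $12-x$ to cancel the pole so that $12m_n^{\mathrm{free}}-m_{n+1}^{\mathrm{free}}$ becomes a polynomial integrated against a (rescaled) semicircle, and expand binomially using the Catalan-number moments $\frac{1}{2\pi}\int_{-2}^{2}y^{2j}\sqrt{4-y^2}\,dy=C_j$. The only divergence is the base case: the paper extends the same cancellation identity to $n=-1$ and invokes $m_0^{\mathrm{free}}=1/4$ from Eq.~(\ref{e:intm0}), whereas you evaluate $m_1^{\mathrm{free}}$ either via the trace identity $12\,\tau(qr)=1$ or via the Hilbert transform of the semicircle --- both fine.
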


\begin{proof}
Substituting   $x=t^2$ in Eq.(\ref{e:intfree}), we get
$$m_n^{\mathrm{free}}=\int_{5-2\sqrt{6}}^{5+2\sqrt{6}} \frac{x^{n-1}}{2\pi}\frac{\sqrt{24-(x-5)^2}}{12-x}dx,\quad n\in\N.$$
Then
$$12m_{n+1}^{\mathrm{free}}-m_{n+2}^{\mathrm{free}}=\frac{1}{2\pi}\int_{5-2\sqrt{6}}^{5+2\sqrt{6}} x^{n}\sqrt{24-(x-5)^2}dx,\quad n\in\N.$$
Substituting $\frac{x-5}{\sqrt{6}}=y$, we get
\begin{equation}\label{e:mdiff}
12m_{n+1}^{\mathrm{free}}-m_{n+2}^{\mathrm{free}}=\frac{1}{2\pi}\int_{-2}^{2} 6(\sqrt{6}y+5)^{n}\sqrt{4-y^2}dy,\quad n\in\N.
\end{equation}
Using the binomial formula, we get
$$12m_{n+1}^{\mathrm{free}}-m_{n+2}^{\mathrm{free}}=6\sum_{k=0}^{n}\frac{1}{2\pi}\int_{-2}^{2} y^k\sqrt{4-y^2}dy\, \binom{n}{k}6^{k/2}5^{n-k} ,\quad n\in\N.$$
Observing that the integral is zero when $k$ is odd,
$$12m_{n+1}^{\mathrm{free}}-m_{n+2}^{\mathrm{free}}=6\sum_{j=0}^{\lfloor\frac n 2 \rfloor}\left(\frac{1}{2\pi}\int_{-2}^{2} y^{2j}\sqrt{4-y^2}dy\right)\, \binom{n}{2j}6^{j}5^{n-2j} ,\quad n\in\N.$$
Hence
$$12m_{n+1}^{\mathrm{free}}-m_{n+2}^{\mathrm{free}}=6\sum_{j=0}^{\lfloor\frac n 2 \rfloor}C_j\, \binom{n}{2j}6^{j}5^{n-2j} ,\quad n\in\N,$$
where $C_j:=\frac{1}{j+1}\binom{2j}j$ is a Catalan number.\\
Eq.~(\ref{e:mdiff}), holds also  if we let $n=-1$:
$$12m_{0}^{\mathrm{free}}-m_{1}^{\mathrm{free}}=\frac{6}{2\pi}\int_{-2}^{2} \frac{\sqrt{4-y^2}}{\sqrt{6}y+5}dy=2.$$
It follows that $m^{\mathrm{free}}_1=1$ because by Eq.~(\ref{e:intm0}) (with $\alpha=1/3$, $\beta=1/4$),  $m^{\mathrm{free}}_0=1/4$.

\end{proof}

\section{\textbf{Estimating the norm $||C+D+C^{-1}+D^{-1}||$}}\label{a:normCDCinvDinv}
\begin{sidewaystable} 
\centering
$ $\\$ $\\$ $\\$ $\\$ $\\$ $\\
$ $\\$ $\\$ $\\$ $\\$ $\\$ $\\
$ $\\$ $\\$ $\\$ $\\$ $\\$ $\\
$ $\\$ $\\$ $\\$ $\\$ $\\$ $\\
$ $\\$ $\\$ $\\$ $\\$ $\\$ $\\
$ $\\$ $\\$ $\\$ $\\$ $\\$ $\\
\begin{tabular}{lllll}
\hline
$n$ & $||h_n||^2$ & $\eta_n$ & $\zeta_n$ & $m_n(h^*h)$ \\
\hline
1 & 4 & 0 & 0 & 4 \\
2 & 14 & 2 & 2 & 30 \\
3 & 46 & 6 & 2 & 270 \\
4 & 182 & 50 & 34 & 2678 \\
5 & 856 & 360 & 244 & 28418 \\
6 & 3888 & 1680 & 844 & 317246 \\
7 & 20536 & 10552 & 6356 & 3681822 \\
8 & 111366 & 60310 & 35010 & 44027350 \\
9 & 642550 & 368762 & 222842 & 538815546 \\
10 & 3850086 & 2291198 & 1407754 & 6714321830 \\
11 & 23444600 & 14185540 & 8719700 & 84869473770 \\
12 & 145705728 & 89557468 & 55720548 & 1085055369622 \\
13 & 915645208 & 568085492 & 355133636 & 14001672259722 \\
14 & 5816241006 & 3637390874 & 2288268034 & 182071429751606 \\
15 & 37273096250 & 23461764106 & 14837859518 & 2382930531465042 \\
16 & 240608480566 & 152250955922 & 96703523122 & 31360608130235654 \\
17 & 1563526262404 & 993951776628 & 633902431984 & 414711515674495370 \\
18 & 10219209908952 & 6522582898368 & 4174630000468 & 5507403086681142854 \\
19 & 67146704535028 & 43011657706540 & 27618539011904 & 73415226964469375622 \\
20 & 443323828665766 & 284895372767222 & 183478938659506 & 981973882890399349286 \\
21 & 2939893937656674 & 1894817824426598 & 1223610644784438 & 13175045740884220099018 \\
22 & 19575351631144042 & 12650487642600618 & 8189644814105262 & 177267112861509055927594 \\
23 & 130835022206113204 & 84759454955281696 & 54997636841585104 & 2391279755795301975623294 \\
24 & 877529231836455728 & 569783620173397812 & 370502892149137828 & 32335124616320091148224950 \\
25 & 5905019922806515884 & 3842215847470546512 & 2503367879099490904 & 438214977894105234044150738\\
26 & 39857811116595156626 & 25984967195646155486 & 16961687532334006854 & 5951190674684154918623110822\\
27 & 269809546538449104054 & 176221080384309789662 & 115227866329705330058 & 80978038411680591548914827558\\
28 & 1831388211478414017418 & 1198180652247376494918 & 784745277424152455990 & 1103891232023903090341589166522\\
\hline
\end{tabular}
\caption{ The series of numbers for $h=C+C^{-1}+D+D^{-1}$ .}
\label{t:Case2SeriesOfNumbers}
\end{sidewaystable}

\begin{table}[b]
\begin{tabular}{llllll}
\hline
$n$ &  $m_n^{\frac1{2n}}$ & $\sqrt{\frac{m_{n}}{m_{n-1}}}$& $\alpha_n$ & $\lambda_{\mathrm{max}}(M_n)$ & $\alpha_{n-1}+\alpha_n$\\
\hline
1 & 2.00000 & 2.00000 & 2.00000 & 2. & - - - - -\\
2 & 2.34035 & 2.73861 & 1.87083 & 2.73861 & 3.87083\\
3 & 2.54230 & 3.00000 & 1.79284 & 3.0557 & 3.66367\\
4 & 2.68211 & 3.14937 & 1.94854 & 3.25861 & 3.74139\\
5 & 2.78843 & 3.25755 & 2.04888 & 3.42926 & 3.99743\\
6 & 2.87375 & 3.34119 & 1.72771 & 3.51875 & 3.77659\\
7 & 2.94445 & 3.40670 & 1.96392 & 3.57859 & 3.69163\\
8 & 3.00423 & 3.45804 & 1.78580 & 3.61369 & 3.74972\\
9 & 3.05548 & 3.49831 & 1.94497 & 3.63956 & 3.73078\\
10 & 3.09992 & 3.53005 & 1.99819 & 3.66407 & 3.94316\\
11 & 3.13878 & 3.55529 & 1.75237 & 3.68118 & 3.75056\\
12 & 3.17305 & 3.57561 & 2.02259 & 3.69704 & 3.77496\\
13 & 3.20348 & 3.59223 & 1.79177 & 3.70881 & 3.81436\\
14 & 3.23068 & 3.60604 & 1.95285 & 3.71884 & 3.74462\\
15 & 3.25515 & 3.61772 & 1.98142 & 3.72895 & 3.93427\\
16 & 3.27727 & 3.62774 & 1.75239 & 3.73636 & 3.73381\\
17 & 3.29738 & 3.63648 & 2.03111 & 3.74354 & 3.78351\\
18 & 3.31575 & 3.64418 & 1.81639 & 3.74935 & 3.84750\\
19 & 3.33261 & 3.65107 & 1.97352 & 3.75482 & 3.78992\\
20 & 3.34813 & 3.65727 & 1.93786 & 3.7603 & 3.91138\\
21 & 3.36249 & 3.66291 & 1.78748 & 3.76455 & 3.72534\\
22 & 3.37581 & 3.66807 & 2.02147 & 3.76876 & 3.80895\\
23 & 3.38821 & 3.67283 & 1.82478 & 3.77227 & 3.84625\\
24 & 3.39979 & 3.67724 & 2.00115 & 3.77583 & 3.82593\\
25 & 3.41062 & 3.68134 & 1.88660 & 3.77924 & 3.88775\\
26 & 3.42079 & 3.68518 & 1.83914 & 3.78205 & 3.72575\\
27 & 3.43036 & 3.68877 & 2.00637 & 3.7849 & 3.84551\\
28 & 3.43939 & 3.69215 & 1.82673 & 3.7873 & 3.83309\\
\hline
\end{tabular}
\caption{ Estimating the norm $||C+C^{-1}+D+D^{-1}||$ .}
\label{t:rootsnorms}
\end{table}

\begin{figure}
  \includegraphics[scale=.5]{./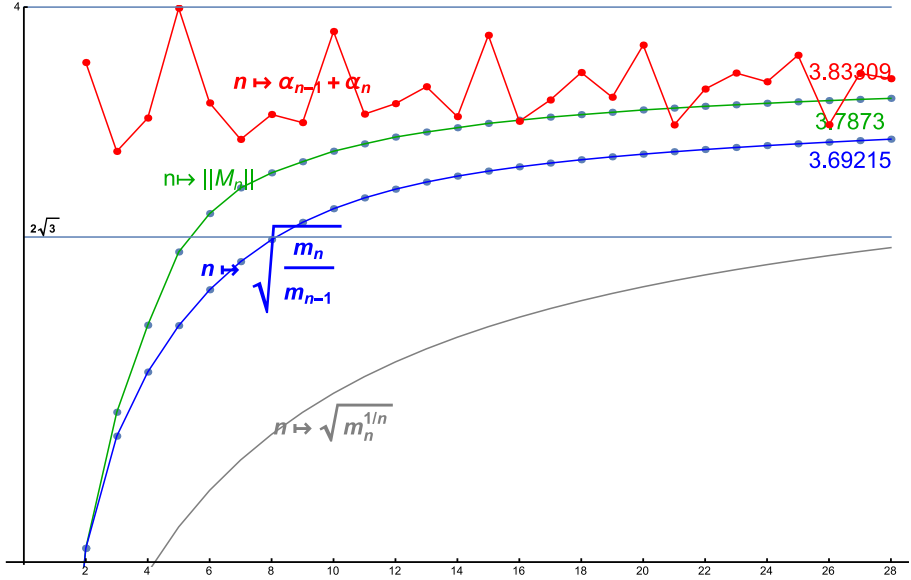}\\
  \caption{Estimating the norm $||C+D+C^{-1}+D^{-1}||.$}\label{f:MnnormsCDCinvDinv}
\end{figure}

Here we will describe briefly our estimation of the norm of
$$h:=C+D+C^{-1}+D^{-1}\in \C T,$$
where $C,D$ are the generators of $T$ whose graphs are shown in Figure \ref{f:generatorsCDofT}.
We use the same procedure as we did in \cite{HaagerupRamirezSolano}.
By \cite[Theorem 1.3]{HaagerupRamirezSolano}
 $$2\sqrt3< ||C+D+C^{-1}+D^{-1}||<4.$$
 The upper bound is never attained because the Thompson group $T$ is not amenable.
 We compute the first 28 even moments $m_n(h^*h)$ by first computing the sequence (with $q=3$)
 \begin{eqnarray*}
 &&h_0=e\\
 &&h_1=h\\
 &&h_2=h h_1-(q+1) h_0\\
 &&h_{n+1}=h h_n - q h_{n-1},\quad n\ge 2.
 \end{eqnarray*}
 Then we compute the sequences
 \begin{eqnarray*}
\xi_n&=&||h_n||_2^2-(q+1)q^{n-1},\\
   \eta_n&=&\xi_n-(q-1)(\xi_{n-1}+\xi_{n-2}+\cdots+\xi_1),\\
  \zeta_n&=&\eta_n-(q-1)(\eta_{n-1}+\eta_{n-2}+\cdots+\eta_1)\\
   m_n&=&\binom{2n}{n} q^n+\sum_{k=1}^{n}\binom{2n}{n-k} (\zeta_k+1-q)q^{n-k},\qquad n\in\N.
 \end{eqnarray*}
  
These series of numbers are listed in Table \ref{t:Case2SeriesOfNumbers}.
Next, we compute the increasing sequences of ``roots", ``ratios", ``norms" that converge to the norm (cf.~Section \ref{s:results}). We list them in Table \ref{t:rootsnorms} and plot them in Figure \ref{f:MnnormsCDCinvDinv}.
The best lower bound for $||h||$ that we can obtain from our results   is
$$||C+D+C^{-1}+D^{-1}||\ge 3.7873.$$
 By making a least squares fitting of the numbers $\lambda^2_{\max}(M_{18}),...,\lambda^2_{\max}(M_{28})$ to a function of the form $f(n)=a-b(n-c)^{-d}$ we get
$$f(n) = 14.8\, -\frac{18.5975}{(n+0.2)^{1.1097}}.$$
In particular, the extrapolation method predicts that
$$||C+D+C^{-1} +D^{-1} ||\approx \sqrt{14.8}=3.84,$$
 which is  closer to $4$.

$ $\newpage

\Addresses

\end{document}